\documentclass[11pt]{amsproc}
 \usepackage[margin=1in]{geometry}
\usepackage{setspace,fullpage}
\geometry{letterpaper}

\usepackage{graphicx}
\usepackage[nice]{nicefrac}
\usepackage{amssymb}
\DeclareGraphicsRule{.tif}{png}{.png}{`convert #1 `dirname #1`/`basename #1.tif`.png}
\usepackage{amsmath,amsthm,amscd,amssymb, mathrsfs}

\usepackage{latexsym}
\usepackage[colorlinks,citecolor=red,pagebackref,hypertexnames=false]{hyperref}

\numberwithin{equation}{section}

\theoremstyle{plain}
\newtheorem{theorem}{Theorem}[section]
\newtheorem{lemma}[theorem]{Lemma}

\newtheorem{conjecture}[theorem]{Conjecture}

\theoremstyle{definition}

\theoremstyle{remark}
\newtheorem{remark}[theorem]{Remark}

\newtheorem{case[theorem]}{Case}

\title[\parbox{14cm}{\centering{ Sharp extension theorems and Falconer distance problems  \hspace{1in}}} \quad]{Sharp extension theorems and Falconer distance problems for algebraic curves in two dimensional vector spaces over finite fields }
\author{ Doowon Koh and Chun-Yen Shen }

\address{Department of Mathematics\\
Michigan State University \\
East Lansing, MI 48824,  USA}
\email{koh@math.msu.edu}

\address{Department of Applied Mathematics\\
 National Chiao Tung University\\
HsinChu 300, Taiwan}
\email{shenc@umail.iu.edu}

\thanks{ }

\subjclass{42B05; 11T24, 52C17}

\begin{document}

\begin{abstract} In this paper we study extension theorems associated with general varieties in two dimensional vector spaces over finite fields. Applying Bezout's theorem, we obtain the sufficient and necessary conditions on general curves where sharp $L^p-L^r$ extension estimates hold. Our main result can be considered as a nice generalization of works by Mochenhaupt and Tao in \cite{MT04} and Iosevich and Koh in \cite{IK08}. As an application of our sharp extension estimates, we also study the Falconer distance problems in two dimensions.
\end{abstract}
\maketitle
\tableofcontents

\section{Introduction}
In the Euclidean setting, the extension theorem is one of the most important, challenging open problems in harmonic analysis.
Since this problem was first addressed in 1967 by Stein (\cite{St78}), it has been extensively studied in the last few decades, in part because it is closely related to other interesting problems in harmonic analysis, such as Kakeya problems, Falconer distance problems, and Bochner-Riesz summability problems.
In the Euclidean case, the extension theorem asks us to determine the optimal range of exponents $1\leq p,r \leq \infty$ such that the following extension estimate holds:
$$\|(fd\sigma)^\vee\|_{L^r({\mathbb R}^d)}\leq C(p,r,d)\|f\|_{L^p(V, d\sigma)} ~~\mbox{for all}~~ f\in L^p(V, d\sigma)$$
where $d\sigma$ is a measure on the set $V$ in ${\mathbb R}^d$ and $(fd\sigma)^\vee$ denotes the inverse Fourier transform of the measure $fd\sigma.$
In the case when the set $V$ is a hypersurface such as the sphere, the paraboloid, or the cone in ${\mathbb R^d}$, the extension problems have received much attention and they were completely solved in lower dimensions. For example, the complete solution for the circle or the parabola in ${\mathbb R^2}$ is due to Zygmund (\cite{Zy74}), and the complete solutions for the cone in ${\mathbb R^3}$ and the cone in ${\mathbb R^4}$ are due to Barcelo (\cite{Ba85}) and Wolff (\cite{Wo01}) respectively.
Currently, the best known results for the cones in ${\mathbb R^d}, d\ge 5$, and the spheres or the paraboloids in ${\mathbb R^d}, d\geq 3$, are due to Wolff (\cite{Wo01}) and Tao (\cite{Ta03}) respectively, in which they used bilinear approach. However, it has been believed that their results can be improved and new ideas seem to be needed to totally understand the extension problems. For a comprehensive survey of these problems in the Euclidean case, see \cite{Ta04} and the references therein. \\

In recent year the extension problems have been also investigated in the finite field setting. The finite field case serves as a typical model for the Euclidean case and it also possesses  structural advantages which enable us to  relate our problems to other well-studied problems in number theory, arithmetic combinatorics, or algebraic geometry. Therefore, we may find useful techniques from these fields to attack our problems.
Moreover, the finite field problems  display independently interesting features. For these reasons, Mochenhaupt and Tao (\cite{MT04}) first constructed the extension problems in the finite field setting and they provided us of remarkable facts related to extension theorems for several kinds of algebraic varieties. In particular, they gave us the complete solution for the parabola in two dimensional vector spaces over finite fields. It was Iosevich and Koh that continued the works by Mochenhaupt and Tao. In \cite{IK09}, they studied the $L^p-L^r$ boundedness of the extension operators associated with paraboloids in higher dimensions and partially improved the results by Mochenhaupt and Tao. In \cite{IK08}, they also studied general spherical extension problems and they especially obtained the complete solution for the extension problems related to non-degenerate quadratic curves in two dimension. \\

In this paper we investigate general properties of algebraic varieties in two dimensional vector space of finite fields on which the extension problems are completely understood. We have the following main theorem which is a generalization of the sharp extension theorems for the parabola in \cite{MT04} and the non-degenerate quadratic curves in \cite{IK08} respectively.

\begin{theorem}\label{Main} Let $\mathbb F_q$ denote a finite field with $q$ elements where we assume that $q$ is a power of odd prime.
Suppose that $P(x)\in {\mathbb F_q}[x_1,x_2]$ is non-zero polynomial.  Define an algebraic variety $V \subset \mathbb F_q^2$ by
$$V=\{x\in {\mathbb F_q^2}: P(x)=0\}.$$
Then, $L^2-L^4$ extension estimate related to $V$ holds if and only if
the polynomial $P(x)$ does not have any linear factor and $|V|\sim q.$
\end{theorem}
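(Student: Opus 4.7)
The plan is to reduce the $L^2$--$L^4$ extension estimate to an additive-combinatorial counting problem on $V$ and then to exploit Bezout's theorem together with the no-linear-factor hypothesis. A standard Plancherel/orthogonality computation shows that, with constants uniform in $q$, the $L^2$--$L^4$ estimate is equivalent (after absorbing the normalization factors involving $|V|$ and $q^2$) to the bilinear inequality
$$\sum_{w\in\mathbb F_q^2}\Bigl|\sum_{\substack{a+b=w\\a,b\in V}}f(a)f(b)\Bigr|^2\lesssim |V|^2\,\|f\|_{L^2(V,d\sigma)}^4,$$
whose specialization to $f\equiv 1$ collapses to the additive-energy bound $E(V):=\sum_{w}r_V(w)^2\lesssim|V|^2$, where $r_V(w):=|\{(a,b)\in V^2:a+b=w\}|$.

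For the necessary direction, the upper bound $|V|\lesssim q$ is Schwartz--Zippel, and since the diagonal trivially gives $E(V)\geq|V|^2$, the above displayed equivalence forces $|V|\gtrsim q$ as well. Moreover, if $P$ had a linear factor then $V$ would contain a line $\ell$; translating $\ell$ along its own direction produces a $q$-element family of $w$'s for which $\ell\subset V\cap(w-V)$, so $r_V(w)\geq q$ on that family and hence $E(V)\gtrsim q^3$, violating the required bound.

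The heart of the proof is sufficiency. For any $w\in\mathbb F_q^2$, the set $V\cap(w-V)$ is the intersection of the plane curves $\{P(x)=0\}$ and $\{P(w-x)=0\}$; whenever these polynomials share no common factor in $\overline{\mathbb F_q}[x_1,x_2]$, Bezout's theorem yields $r_V(w)\leq(\deg P)^2=O(1)$. Let $B\subset\mathbb F_q^2$ denote the exceptional set on which this gcd is nonconstant. Any common irreducible factor forces an identity of the form $c\,R_i(w-x)=R_j(x)$ for some irreducible components $R_i,R_j$ of $P$ and a scalar $c$. Comparing the leading homogeneous parts fixes $c$ and forces $R_i,R_j$ to have proportional top forms; comparing the next homogeneous part then produces nondegenerate affine equations in $w$---crucially, this step only makes sense if $\deg R_i\geq 2$, so that the next homogeneous part still has positive degree $\deg R_i-1$. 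Since by hypothesis every irreducible factor of $P$ has degree at least two, each of the $O(1)$ ordered pairs $(i,j)$ contributes only finitely many solutions, and $|B|$ is bounded by a constant depending on $P$ but not on $q$.

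To finish, I would split the bilinear sum according to whether $w\in B$. For $w\notin B$, Cauchy--Schwarz in the inner sum gives $|\sum_{a+b=w}f(a)f(b)|^2\leq r_V(w)\sum_{a+b=w}|f(a)|^2|f(b)|^2\lesssim\sum_{a+b=w}|f(a)|^2|f(b)|^2$, and summing over $w$ produces $\lesssim\|f\mathbf{1}_V\|_{\ell^2}^4=|V|^2\|f\|_{L^2(V,d\sigma)}^4$. For $w\in B$, a direct Cauchy--Schwarz in $a$ yields $|\sum_{a+b=w}f(a)f(b)|\leq\|f\mathbf{1}_V\|_{\ell^2}^2$, contributing at most $|B|\,\|f\mathbf{1}_V\|_{\ell^2}^4$, which is again of the right order. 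Combining the two pieces completes the argument. The principal obstacle I anticipate is the sufficiency step: organizing the coefficient-matching argument to handle uniformly all pairs of irreducible components of $P$---including potential ``self-symmetries'' with $R_i=R_j$ as well as higher-degree components---and making transparent that the no-linear-factor hypothesis is precisely what prevents the matching equations from admitting a one-parameter family of solutions $w$, which would reintroduce the line obstruction used in the necessity argument.
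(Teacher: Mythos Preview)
Your strategy matches the paper's: reduce $L^2\!\to\! L^4$ to a convolution/additive-energy estimate, then use Bezout to bound $r_V(w)$ off a finite exceptional set. Two points deserve comment.

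First, the normalization in your displayed bilinear inequality is off by a factor $q^2/|V|^2$. The correct equivalence is
\[
\sum_{w}\Bigl|\sum_{a+b=w}f(a)f(b)\Bigr|^2\ \lesssim\ \frac{|V|^2}{q^2}\,\|f\mathbf 1_V\|_{\ell^2}^4,
\]
which with $f\equiv 1$ gives $E(V)\lesssim|V|^4/q^2$; combined with the diagonal bound $E(V)\ge|V|^2$ this forces $|V|\gtrsim q$. With your stated right-hand side the two bounds collapse to $E(V)\sim|V|^2$, which says nothing about $|V|$.

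Second, the obstacle you anticipate is real: if the leading form of an irreducible factor happens to be a power of a single linear form, the degree-$(d-1)$ coefficient-matching only determines one linear functional of $w$, not $w$ itself. The paper sidesteps the whole coefficient computation with a one-line geometric argument. If $w-V_i=V_j$ and $w'-V_i=V_j$ for $w\neq w'$, then the reflected curve $-V_i$ is invariant under the nonzero translation $w-w'$; iterating, every point of $-V_i$ lies on a full line in the direction $w-w'$, so $-V_i$ contains a line, contradicting the no-linear-factor hypothesis. Hence each ordered pair $(i,j)$ of irreducible components contributes at most one exceptional $w$, and your set $B$ has at most $(\#\text{factors})^2=O_{\deg P}(1)$ elements. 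This is exactly the ``line obstruction'' you gestured at in your final sentence, used directly rather than through coefficient identities.

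The paper also organizes things slightly differently: it decomposes $V=\bigcup_j V_j$ into irreducible pieces, uses the triangle inequality in $L^4$ to reduce to a single $f_j=f\mathbf 1_{V_j}$, and then only needs the diagonal case $i=j$ of the exceptional-set analysis (its Lemma~3.2). Your version, handling all of $V$ at once and absorbing cross-terms $V_i\cap(w-V_j)$ into $B$, works just as well once the geometric lemma above is in hand.
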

Here and throughout the paper, we denote by $|V|$ the cardinality of $V,$ and $|V|\sim q$ means that there exist $C,c>0$ depending only on the degree of the polynomial $P(x)$ such that $c q \leq |V|\leq Cq.$ We also notice that the norm of the extension operator depends only on the degree of $V$ and on the ratio $|V|/q.$ In addition, we assume that the characteristic of the underlying finite field $\mathbb F_q$ is greater than the degree of $V.$

\begin{remark} In section \ref{section2}, the definition of extension problems in finite fields is reviewed and we will give the proof of Theorem \ref{Main} in section \ref{section3}. The $L^2-L^4$ estimate in Theorem \ref{Main} gives the critical exponents for all possible exponents where the extension estimate holds (see Remark \ref{endpoint} and the necessary conditions (\ref{N1}) in section \ref{section2} ). This presents an interesting fact that there exist some differences between the finite field case and the Euclidean case. For example, let us consider a set $V$ consisting of all zeros of $x_1^4+x_2^4-1=0.$ In the Euclidean case, the extension estimate for this variety $V$ is much worse than that for the circle variety, $\{x\in \mathbb R^2:x_1^2+x_2^2=1\}$, because $V$ is a curve with a vanishing Gaussian curvature (see \cite{DI95} or the pages $414$ and $418$ in \cite{St93}). However, Theorem \ref{Main} says that the circle and the variety $V$ yield the same extension estimate in finite fields.  Another difference is that the curve in the finite field case yields much better extension estimate than its counterpart of the Euclidean case. For instance, the $L^4-L^4$ estimate gives the critical exponents up to the endpoints for the circular extension estimate in $\mathbb R^2$ (see \cite{Ta04}). However, this best possible result is much worse than the $L^2-L^4$ extension estimate which yields sharp exponents in finite fields case.

\end{remark}

Another interesting problem in harmonic analysis is the Falconer distance problem which is closely related to the extension problems. Let $E \subset {\bf R}^d$, $d \ge 2$ be a compact subset. In the Euclidean setting, the Falconer distance problem is to find $s_0>0$ such that if the Hausdorff dimension of $E$ is greater than $s_0$, then one-dimensional Lebesgue measure of $\Delta(E,E)$ is positive, where  $\Delta(E,E)$ denotes the distance set given by
$$ \Delta(E,E)=\{|x-y|\in {\bf R}: x, y\in E\}.$$
This problem was first addressed by Falconer (\cite{Fa85}) who conjectured that if the Hausdorff dimension of $E\subset R^d$ is greater than $d/2$, then the Lebesgue measure of $\Delta(E,E)$ is positive. This problem has not been solved in all dimensions.
Using the estimates of the Fourier transform of the characteristic function of an annulus in ${\bf R^d}$, Falconer in \cite{Fa85} first obtained the following nontrivial result:
$$ \mbox{if}\quad \mbox{dim}(E)> \frac{d+1}{2}, ~~\mbox{then}~~ |\Delta(E,E)|>0,$$
where $\mbox{dim}(E)$ denotes the Hausdorff dimension of $E\subset {\bf R^d}$ and $|\Delta(E,E)|$ denotes one-dimensional Lebesgue measure of the distance set $\Delta(E,E).$
In \cite{Ma87}, Mattila generalized the Falconer result by showing that
 $\mbox{if}\quad \mbox{dim}(E)+\mbox{dim}(F)> d+1, ~~\mbox{then}~~ |\Delta(E,F)|>0,$
where $E, F$ are compact subsets of ${\bf R^d}$ and $\Delta(E,F)=\{|x-y|\in {\bf R}: x\in E, y\in F\}.$ Moreover, he reduced the Falconer distance problem to estimating the spherical means of Fourier transforms of measures. Using Mattila's approach, Wolff (\cite{Wo99}) proved that in two dimension, $\mbox{dim}(E) > 4/3$ implies $|\Delta(E,E)|>0$ which is the best known result in two dimension. Applying Mattila's approach along with the weighted version of Tao's bilinear extension theorem (\cite{Ta03}), Erdo\~{g}an in \cite{Er05} obtained the best known results in higher dimensions: if $\mbox{dim}(E)> d/2+1/3$, then $|\Delta(E,E)| >0.$ \\

In \cite{IR07}, Iosevich and Rudnev first studied an analog of the Falconer distance problem in the finite field setting and Iosevich and Koh (\cite {IK07}) studied the problems related to the general cubic distances.
Let $E$ be a subset of  ${\mathbb F}_q^d$, $d \ge 2$, the $d$-dimensional vector space over the finite field $\mathbb F_q.$  For each $x\in \mathbb F_q^d $ and   a positive integer $n \geq 2$, we define  $\|x\|_n = x_1^n+\cdots + x_d^n.$
Let $ \Delta_n(E,E)=\{\|x-y\|_n: x,y \in E\}$  viewed as a subset of $ {\mathbb F}_q.$ The Falconer distance problem in this context asks for the smallest number $s_0$ such that $\Delta_n(E,E)$ contains a positive proportion of the elements of ${\mathbb F}_q$ provided that
$|E| \ge Cq^{s_0}$. Iosevich and Koh conjectured in \cite{IK07} that if $|E|\geq C q^{d/2}$ with $C$ sufficiently large, then  $|\Delta_n(E,E)| \gtrsim q$, which generalizes the conjecture originally stated in \cite{IR07} for $n=2$. However, it turned out that the conjecture is not true in the case when $n=2$ and  the dimension $d$ is odd. In fact, arithmetic examples constructed by the authors in \cite{HIKR10} show that the exponent $(d+1)/2$ is sharp. However, it has been believed that the conjecture may hold if the dimension $d$ is even, in part because the extension theorem for spheres in even dimensions can be better than in odd dimensions for finite fields. The authors in \cite{CEHIK09} recently showed that
if $E \subset {\mathbb F}_q^2$ of cardinality $\geq Cq^{4/3}$, with $C>0$ large, then we have
\begin{equation}\label{kohoriginal} |\Delta_2(E,E)| \gtrsim q.\end{equation}
When $d=2$, the exponent $4/3$ not only matches the one obtained by Wolff in reals, but gives a better result than the exponent $(d+1)/2$ which gives a sharp exponent in odd dimensions.  In this paper, we shall show that the general version also holds. Namely, we have the following main theorem for the Falconer distance problem in two dimension.

\begin{theorem}\label{sharpwolff1}
Let $E, F\subset {\mathbb F_q^2}.$
If $|E||F|\gtrsim q^{\frac{8}{3}}$, then we have
$$ |\Delta(E,F)|:=|\{\|x-y\|_2\in {\mathbb F_q}: x\in E, y\in F\}| \gtrsim q,$$
here, throughout the paper, we define $\|x\|_2=x_1^2+x_2^2.$
\end{theorem}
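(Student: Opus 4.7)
The plan is to reduce the distance estimate to bounding a second moment of the distance-counting function and then invoke the sharp $L^2$--$L^4$ extension estimate for circles supplied by Theorem~\ref{Main}. Setting $S_t=\{z\in\mathbb{F}_q^2:\|z\|_2=t\}$ and $\nu(t)=|\{(x,y)\in E\times F:\|x-y\|_2=t\}|$, Cauchy--Schwarz gives
\begin{equation*}
|\Delta(E,F)|\ \ge\ \frac{\bigl(\sum_t\nu(t)\bigr)^2}{\sum_t\nu(t)^2}\ =\ \frac{(|E||F|)^2}{\sum_t\nu(t)^2},
\end{equation*}
so the theorem will follow once we prove $\sum_t\nu(t)^2\lesssim (|E||F|)^2/q$ whenever $|E||F|\gtrsim q^{8/3}$.

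The first technical step is to expand $\nu(t)$ by Fourier inversion on $\mathbb{F}_q^2$ and to compute the cross-correlation $\sum_t\widehat{1_{S_t}}(m)\,\overline{\widehat{1_{S_t}}(m')}$ directly, by counting pairs $(x,x')\in\mathbb{F}_q^2\times\mathbb{F}_q^2$ with $\|x\|_2=\|x'\|_2$. A short calculation produces the decomposition
\begin{equation*}
\sum_t\nu(t)^2\ =\ \frac{(|E||F|)^2}{q}\ +\ q^{-2}\sum_{s\in\mathbb{F}_q}|B(s)|^2\ +\ (\text{negligible lower-order terms}),
\end{equation*}
where $B(s)=\sum_{m\in S_s\setminus\{0\}}\widehat{1_E}(-m)\widehat{1_F}(m)$. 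Hence the remaining task reduces to showing $\sum_s|B(s)|^2\lesssim q^{3}(|E||F|)^{5/4}$, since under the hypothesis $|E||F|\gtrsim q^{8/3}$ this is exactly the size that is absorbed by the main term.

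The central step treats $s\ne 0$. Here the polynomial $x_1^2+x_2^2-s$ has no linear factor and $|S_s|\sim q$, so Theorem~\ref{Main} applies to $S_s$ with a constant depending only on the degree and on $|S_s|/q$, uniformly in $s$. Dualizing the $L^2$--$L^4$ extension estimate gives the $L^{4/3}$--$L^2$ restriction estimate
\begin{equation*}
U_s(A)\ :=\ \sum_{m\in S_s}|\widehat{1_A}(m)|^2\ \lesssim\ q\,|A|^{3/2}\qquad\text{for all }A\subset\mathbb{F}_q^2\text{ and }s\ne 0.
\end{equation*}
Cauchy--Schwarz on $S_s$ yields $|B(s)|^2\le U_s(E)U_s(F)$; assuming $|E|\le|F|$ without loss of generality, one then pulls $\max_{s\ne 0}U_s(E)\lesssim q|E|^{3/2}$ out of the $s$-sum and invokes Plancherel's identity $\sum_s U_s(F)=q^2|F|$ to obtain
\begin{equation*}
\sum_{s\ne 0}|B(s)|^2\ \lesssim\ q|E|^{3/2}\cdot q^2|F|\ =\ q^3|E|^{3/2}|F|\ \le\ q^3(|E||F|)^{5/4},
\end{equation*}
where the last inequality uses $|E|\le|F|$.

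The main obstacle will be the isotropic case $s=0$: the variety $x_1^2+x_2^2=0$ is either the single point $\{0\}$ or, when $-1$ is a square in $\mathbb{F}_q$, a union of two linear factors, so Theorem~\ref{Main} does not apply to $S_0$ and this term must be handled by hand. I would dispose of it by a bisection argument: if either $E$ or $F$ contains many points on some isotropic line $L$ then direct parameterization of $\|x-y\|_2=-2x\cdot y+\|y\|_2$ along $L$, combined with pigeonholing in the complementary factor, already produces $|\Delta(E,F)|\gtrsim q$; otherwise, the one-dimensional slicing identity $\sum_{m\in L}|\widehat{1_A}(m)|^2=q\sum_c n_c(A)^2$ controls $U_0(A)$ in terms of the maximum isotropic-line incidence of $A$ and is strong enough to absorb the $s=0$ contribution into the main term under the hypothesis $|E||F|\gtrsim q^{8/3}$.
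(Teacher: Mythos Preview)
Your overall strategy and the handling of $s\ne 0$ via the dual restriction estimate $U_s(A)\lesssim q|A|^{3/2}$ match the paper's route (this is exactly the paper's Lemma~\ref{restriction}), and your decomposition of $\sum_t\nu(t)^2$ agrees with the paper's terms $\mathrm{I}$--$\mathrm{III}$.

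The gap is in the isotropic case $s=0$ when $q\equiv 1\pmod 4$: your bisection does not close at the threshold $|E||F|\gtrsim q^{8/3}$. In Case~1, if $|E\cap L'|=M_E>T$ for an isotropic line $L'$, then for any single $y\notin L'$ the map $x\mapsto\|x-y\|_2$ is an injection on $E\cap L'$, giving $|\Delta(E,F)|\ge M_E$; but the images for different $y$ are affine images $2\alpha_y\Lambda'+c_y$ of one fixed set $\Lambda'\subset\mathbb F_q$ and there is no mechanism to make their union larger than $\sim M_E$. So Case~1 forces $T\gtrsim q$. In Case~2 your slicing identity gives $U_0(A)\lesssim q\,M_A|A|$, hence $|B(0)|^2\le U_0(E)U_0(F)\lesssim q^2T^2|E||F|$, and absorbing $q^{-2}|B(0)|^2$ into the main term $q^{-1}(|E||F|)^2$ forces $T^2\lesssim q^{-1}|E||F|$. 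Combining the two constraints yields $|E||F|\gtrsim q^3$, strictly stronger than the hypothesis.

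The paper avoids estimating $|B(0)|^2$ altogether. For $q\equiv 1\pmod 4$ it replaces the Cauchy--Schwarz step by
\[
\bigl(|E||F|-\nu(0)\bigr)^2\ \le\ |\Delta(E,F)|\sum_{t\ne 0}\nu(t)^2,
\]
and computes (Lemma~\ref{zerodistance}) that $\nu(0)=O(q^{-1}|E||F|)+q^3\sum_{m\in V_0}\overline{\widehat E}(m)\widehat F(m)$. Squaring $\nu(0)$ produces a term $q^6\bigl(\sum_{m\in V_0}\overline{\widehat E}(m)\widehat F(m)\bigr)^2$ that \emph{exactly cancels} the $k=0$ summand in the expansion of $\sum_t\nu(t)^2$; what remains in $\sum_{t\ne 0}\nu(t)^2$ is only the $k\ne 0$ part (handled by your $s\ne 0$ argument) plus cross terms of size $O\bigl((|E||F|)^{3/2}\bigr)$ coming from the trivial bound $\bigl|\sum_{m\in V_0}\overline{\widehat E}(m)\widehat F(m)\bigr|\le q^{-2}(|E||F|)^{1/2}$. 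Since also $|\nu(0)|\lesssim q(|E||F|)^{1/2}\ll|E||F|$ under $|E||F|\gtrsim q^{8/3}$, the left side is $\sim(|E||F|)^2$ and the argument closes at $q^{8/3}$. The key idea you are missing is this subtraction: one does not bound the $s=0$ contribution, one cancels it against $\nu(0)^2$.
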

In particular, our result in two dimension improves the Shparlinski's work in \cite{Sh06} which says that
$$ |\Delta_2(E,F)| > \frac{|E||F|q}{q^{d+1}+|E||F|}.$$
Here, we recall that for positive numbers $X$ and $Y$, the notation $X\lesssim Y$ means that there exists a constant $C>0$ independent of the parameter $q$ such that $X\leq C Y.$ For a complex number $A$ and a non-negative real number $B$, the notation $A=O(B)$ is used if  $ |A|\leq CB$ for some $C>0$ independent of the size of the underlying finite field $\mathbb F_q.$

\begin{remark}\label{address} The proof of Theorem \ref{sharpwolff1} will be given in section \ref{section4}. To prove our main theorem, we mainly follow the methods which the authors in \cite{CEHIK09} used to obtain the result given in (\ref{kohoriginal}). However, our Lemma \ref{keylemma1} which plays a crucial role in the proof was obtained by a new method and it can be also applied to a more general setting. We hope that our Lemma \ref{keylemma1} and Lemma \ref{restriction} provide a clue to attack the generalized distance problems such as the cubic distance problem which was initially studied by the authors in \cite{IK07}.
\end{remark}


\section{Notation and definitions for extension problems} \label{section2}
We review some notation and definitions related to the extension problems in the finite field setting. We shall use the notation and definitions given in \cite{IK08} or \cite{IK09}.  Let ${\mathbb F}_q$ be a finite field with $q$ elements. We denote by ${\mathbb F}_q^d$  the $d$-dimensional vector space over the finite field ${\mathbb F}_q$. We shall work on the function space $({\mathbb F}_q^d, dx)$
where a normalized counting measure $dx$ is always endowed and our algebraic varieties shall be defined. Therefore,
given a complex valued function $f: ({\mathbb F}_q^d, dx) \to {\mathbb C}$,  the integral of the function $f$ over the function space $({\mathbb F}_q^d, dx)$ is given by
$$\int_{{\mathbb F}_q^d} f(x)~dx = \frac{1}{q^d} \sum_{x\in {\mathbb F_{q}^d}} f(x).$$
We also define the frequency space $({\mathbb F}_q^d, dm)$ as the dual space of the function space $({\mathbb F}_q^d, dx)$,
where we always endow the frequency space with the counting measure $dm.$ For a fixed non-trivial additive character $\chi: {\mathbb F}_q \to {\mathbb C},$ we therefore define the Fourier transform of the function $f$ on $({\mathbb F}_q^d, dx)$ by the following formula
\begin{equation}\label{defofrF} \widehat{f}(m)= \int_{{\mathbb F_q^d}} \chi(-m\cdot x)f(x)~dx= \frac{1}{q^d}\sum_{x\in {\mathbb F_q^d}} \chi(-m\cdot x) f(x),\end{equation}
where $m $ is an element of the dual space $({\mathbb F_q^d} ,dm ).$ Recall that the Fourier transform of the function $f$ on $({\mathbb F_q^d}, dx)$ is actually defined on the dual space $({\mathbb F_q^d}, dm).$ Here we endow the dual space $({\mathbb F_q^d}, dm)$ with a  counting measure $dm.$ We therefore see that the Fourier inversion theorem holds:
\begin{equation}\label{inversionF}f(x)=\int_{m\in {\mathbb F_q^d}} \chi(m\cdot x) \widehat{f}(m)~dm=\sum_{m\in {\mathbb F_q^d}} \chi(m\cdot x) \widehat{f}(m),\end{equation}
where $f$ is the complex valued function defined on $({\mathbb F_q^d}, dx).$  Using the orthogonality relation of the non-trivial additive character, meaning that $\sum_{x\in \mathbb F_q^d}\chi(m\cdot x)=0$ for $m\neq (0,\dots,0)$, we also see that  the Plancherel theorem holds:
$$ \|\widehat{f}\|_{L^2({\mathbb F}_q^d, dm)}=\|f\|_{L^2({\mathbb F_q^d},dx)}.$$
In other words, the Plancherel theorem takes the following formula:
\begin{equation}\label{plancherel} \sum_{m\in {\mathbb F_q^d}} |\widehat{f}(m)|^2= \frac{1}{q^d}\sum_{x\in {\mathbb F_q^d}} |f(x)|^2.\end{equation}
Let $f$ and $h$ be the complex valued functions defined on the function space $({\mathbb F_q^d}, dx).$
The convolution of $f$ and $g$ is defined on the function space $(\mathbb F_q^d, dx)$ by the formula
$$f\ast h (y)=\int_{x\in {\mathbb F_q^d}} f(y-x)g(x)~dx = \frac{1}{q^d}\sum_{x\in {\mathbb F_q^d}} f(y-x)~g(x).$$ Then, we can easily check that
$$ \widehat{(f\ast h)}(m)= \widehat{f}(m)\cdot \widehat{h}(m).$$
\begin{remark} Throughout the paper we always consider the variable $``x"$ as an element of the function space $({\mathbb F_q^d}, dx)$ with the normalized counting measure $dx$. On the other hand, we always use  the variable $``m"$ for the element of the frequency space $({\mathbb F_q^d}, dm)$ with the counting measure $dm$.
\end{remark}

\subsection{Extension problems for general algebraic varieties in ${\mathbb F_q^2}$ }
We now introduce algebraic varieties $V \subset ({\mathbb F_q^2}, dx)$ on which we shall work. In addition, we review the definition of extension problems related to the variety $V.$ Let $P(x)\in {\mathbb F_q}[x_1,x_2]$ be a polynomial with degree $k.$ Throughout the paper we always assume that the degree of the polynomial $P(x)$ is less than the characteristic of the underlying finite field ${\mathbb F_q}.$ We shall consider the following algebraic variety $V \subset ({\mathbb F_q^2},dx)$ generated by the polynomial $P(x)\in {\mathbb F_q}[x_1,x_2]$ :
$$V=\{x\in {\mathbb F_q^2}: P(x)=0\}.$$
We endow the variety $V$ with a normalized surface measure $d\sigma$ defined by the relation
\begin{equation}\label{measuredef} \int_{V} f(x) ~d\sigma(x) = \frac{1}{|V|}\sum_{x\in V} f(x),\end{equation}
where $|V|$ denotes the number of elements in $V.$ Note that the total mass of $V$ is one and the measure $\sigma$ is just a function on $({\mathbb F_q^2, dx})$ given by
\begin{equation}\label{measuredef1} d\sigma(x)= \frac{q^d}{|V|} V(x),\end{equation}
here, and throughout the paper, we identify the set $V$ with the characteristic function on the set $V.$ For instance, we write $E(x)$ for $\chi_{E}(x)$
where $E$ is a subset of the function space $({\mathbb F_q^2}, dx).$  For $1\leq p, r \leq \infty,$ we denote by $R^*(p\to r)$ the smallest constant such that the following extension estimate holds:
\begin{equation}\label{test} \|(fd\sigma)^\vee\|_{L^r({\mathbb F_q^2}, dm)} \leq R^*(p\to r) \|f\|_{L^p(V,d\sigma)}\end{equation}
for every function $f$ defined on $V$ in $({\mathbb F_q^2}, dx ),$ where the inverse Fourier transform of the measure $fd\sigma$ takes the form:
$$ (fd\sigma)^\vee(m)=\int_{V} \chi(m\cdot x) f(x)~d\sigma(x)= \frac{1}{|V|}\sum_{x\in V} \chi(m\cdot x) f(x).$$
By duality,  $R^*(p\to r)$ is also defined as the smallest constant such that the following restriction estimate holds:
\begin{equation}\label{defrestriction}
\|\widehat{g}\|_{L^{p^\prime}(V,d\sigma)} \leq R^*(p\to r) \|g\|_{L^{r^\prime}(\mathbb F_q^d, dm)}
\end{equation}
for all functions $g$ on $(\mathbb F_q^d,dm)$ where $p^\prime$ and $ r^\prime$ denote the dual exponents of $p$ and $r$ respectively which mean $1/p+1/p^\prime =1$ and $1/r+1/r^\prime =1.$ The constant $R^*(p\to r)$ may depend on $q,$ the size of the underlying finite field
${\mathbb F_q}.$ However, the extension problem asks us to determine the exponents $1\leq p,r\leq \infty$ such that $R^*(p\to r)\lesssim 1$ where the constant under the notation $\lesssim$ is independent of $q$ and it depends only on the degree of the variety $V$ and on the ratio $|V|/q.$

\begin{remark}\label{keydef} Here, we need to be careful with the definition of $\widehat{g}$ in the restriction estimate (\ref{defrestriction}). Sine $g$ is defined on $(\mathbb F_q,dm)$ with a counting measure $``dm"$, the Fourier transform $\widehat{g}$ is actually defined on the dual space $(\mathbb F_q^d,dx)$ with the normalized counting measure. Thus, the Fourier transform of the function $g$ takes the following : for each $x\in (\mathbb F_q^d, dx),$
\begin{equation}\label{anotherFourier} \widehat{g}(x)=\int_{\mathbb F_q^d} \chi(-x\cdot m) g(m)dm = \sum_{m\in \mathbb F_q^d} \chi(-x\cdot m) g(m),\end{equation}
which is different from the definition of the Fourier transform in (\ref{defofrF}). Notice that the difference happened because the Fourier transform depends on its domain. Thus, when we compute the Fourier transform, we must carefully check its domain. \end{remark}

\begin{remark}\label{endpoint}A direct calculation yields the trivial estimate, $R^*(1\to \infty)\lesssim 1.$
Using H\"{o}lder's inequality and the nesting properties of $L^p$-norms, we also see that
$$ R^*(p_1\to r)\leq R^*(p_2\to r)\quad\mbox{for}~~1\leq p_2\leq p_1\leq \infty$$
and
$$ R^*(p\to r_1)\leq R^*(p\to r_2)\quad \mbox{for}~~ 1\leq r_2\leq r_1\leq \infty.$$\\
For any fixed exponent $1\leq p ( \mbox{or}~r )\leq\infty$, we therefore aim to find the smallest exponent $1\leq r( \mbox{or}~p )\leq\infty$ such that $R^*(p\to r)\lesssim 1.$ By interpolating the result $R^*(p\to r)\lesssim 1 $ with the trivial bound $R^*(1\to \infty)\lesssim 1$, further results can be obtained.\end{remark}

\subsection{Necessary conditions for $R^*(p\to r)\lesssim 1$ }

Mochenhaupt and Tao in \cite{MT04} observed the necessary conditions for the boundedness of extension operators related to general algebraic varieties in $d$-dimensional vector spaces over finite fields. For example, if $V \subset({\mathbb F_q^2},dx)$ is an algebraic variety with $|V|\sim q^s$ for some $0<s<2$, then the necessary conditions for $R^*(p\to r)\lesssim 1$ take the following:
\begin{equation}\label{N1} r\geq \frac{4}{s}\quad \mbox{and} \quad  r\geq \frac{2p}{s(p-1)}.\end{equation}
However, if  $V$ contains a $\alpha$-dimensional affine subspace $H (|H|=q^{\alpha})$, the necessary conditions in (\ref{N1}) can be improved by adding the condition
\begin{equation}\label{N3}
r\geq \frac{p(2-\alpha)}{(p-1)(s-\alpha)}
\end{equation}
For the proof of above necessary conditions, see pages $41-42$ in \cite{MT04}.
\begin{remark}\label{linebad}Let $V=\{x\in {\mathbb F_q^2}: P(x)=0\}$ be an algebraic variety in $({\mathbb F_q^2}, dx),$ where $P(x)\in {\mathbb F_q}[x_1,x_2]$ is a non-zero polynomial. Then it is clear that $|V|\lesssim q$ where the constant depends only on the degree of the polynomial $P(x)$ and the variety $V$ can only contain zero or one dimensional affine subspace. If $V$ contains one dimensional affine subspace  $H$(a line) and $|V|\sim q$,  then the necessary condition (\ref{N3}) says that there are no extension estimates except the trivial cases, $R^*(p\to \infty) \lesssim 1$ for $1\leq p\leq \infty.$ Thus, we are only interested in the case when our variety $V$ does not contain any line. Namely, the polynomial $P(x)$ generating the variety $V$ does not have any linear factor. In this case, if $|V|\sim q,$ then the necessary conditions (\ref{N1}) exactly takes the following:
$$ r\geq 4\quad \mbox{and} \quad  r\geq \frac{2p}{p-1}.$$
In fact, the necessary condition, $r\geq \frac{2p}{p-1}$, can be obtained by testing (\ref{test}) with the function $f$ supported in one point of $V.$ If $V$ contains a large subset $H$ of a line, then this necessary condition must be improved by testing (\ref{test}) with the characteristic function on $H.$ However, Bezout's Theorem (see Theorem \ref{Bezout'sTheorem}) says that the cardinality of $H$ can not be more than the degree of $P(x)$, because otherwise $V$ must contain the line. This observation leads us to the following conjecture.\end{remark}

\begin{conjecture}\label{conj} Given a non-zero polynomial $P(x)\in {\mathbb F_q}[x_1,x_2],$ define an algebraic variety $V$ by
$$V=\{x\in {\mathbb F_q^2}: P(x)=0\}.$$
If $|V|\sim q$ and  the polynomial $P(x)$ does not have any linear factor, then the necessary conditions (\ref{N1}) are in fact sufficient conditions for $R^*(p\to r)\lesssim 1.$\end{conjecture}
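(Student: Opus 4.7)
The plan is to derive Conjecture \ref{conj} as a direct consequence of Theorem \ref{Main}, together with the trivial estimate $R^*(1\to\infty)\lesssim 1$ and the monotonicity properties of $R^*$ recorded in Remark \ref{endpoint}. The genuinely substantive ingredient is the critical endpoint $R^*(2\to 4)\lesssim 1$, which Theorem \ref{Main} supplies under exactly the hypotheses of the conjecture (no linear factor of $P$ and $|V|\sim q$).

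First, I would apply Riesz--Thorin interpolation to the linear extension operator $f\mapsto (fd\sigma)^\vee$ between the endpoints $(1/p,1/r)=(1/2,1/4)$ and $(1/p,1/r)=(1,0)$. A routine parameterization of the interpolation line yields
$$ R^*\!\left(p\to \tfrac{2p}{p-1}\right)\lesssim 1 \qquad\text{for all}~1\leq p\leq 2, $$
which is exactly the boundary line $r=2p/(p-1)$ of the necessary region in this $p$-range. Second, I would extend these estimates to the full region $\{(p,r):r\geq 4,\ r\geq 2p/(p-1)\}$ using the monotonicity statements from Remark \ref{endpoint}: for $1\leq p\leq 2$ we enlarge $r$ beyond $2p/(p-1)$ via $R^*(p\to r_1)\leq R^*(p\to r_2)$ with $r_2\leq r_1$; for $p\geq 2$ the binding condition is $r\geq 4$, and $R^*(p\to r)\leq R^*(2\to r)\leq R^*(2\to 4)\lesssim 1$. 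Since the conjectured sufficient region coincides exactly with the necessary region (\ref{N1}) specialized to $s=1$, this completes the deduction.

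The real obstacle, therefore, is not the conjecture itself but the endpoint Theorem \ref{Main}, whose proof is the content of Section \ref{section3}. I would expect it to proceed via a $TT^*$/Plancherel identity expressing $\|(fd\sigma)^\vee\|_{L^4}^4$ as an $L^2$ norm of $(fd\sigma)\ast(fd\sigma)$, which in the model case $f\equiv \mathbf{1}$ is controlled by the number of solutions of $a+b=c+d$ with $a,b,c,d\in V$. Bounding this uniformly---equivalently, bounding $\max_\xi\#\{(a,b)\in V^2:a+b=\xi\}$ by a constant depending only on $\deg P$---is precisely the place where Bezout's theorem enters, and also precisely where the hypothesis that $P$ has no linear factor is needed: otherwise $V$ would contain a full line and a single $\xi$ could admit $\sim q$ representations, destroying the bound.
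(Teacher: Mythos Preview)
Your proposal is correct and matches the paper's own reasoning essentially line for line: the paper explicitly notes (in the paragraph immediately following the conjecture) that Theorem \ref{Main} gives $R^*(2\to 4)\lesssim 1$, and then invokes Remark \ref{endpoint} (interpolation with $R^*(1\to\infty)\lesssim 1$ plus monotonicity) to cover the full region (\ref{N1}). Your sketch of the expected proof of Theorem \ref{Main} is also accurate in spirit, though the paper's Section \ref{section3} additionally handles the decomposition $V=\bigcup_j V_j$ into irreducible components and isolates a single exceptional translate $a_j$ per component via Lemma \ref{mainlemma}.
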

In the case when $V=\{x\in \mathbb F_q^2: x_1^2-x_2=0\}$ is the parabola, Mochenhaupt and Tao in \cite{MT04} proved that Conjecture \ref{conj} holds.
Iosevich and Koh in \cite{IK08} also showed that Conjecture \ref{conj} is true if $V=\{x\in \mathbb F_q^2: a_1x_1^2+ a_2x_2^2=j\} $ with $a_1,a_2, j\ne 0$ is the nondegenerate quadratic curve. In fact, Theorem \ref{Main} shows that  Conjecture \ref{conj} is true for arbitrary algebraic curves.
To see this, notice from Remark \ref{endpoint} that if $|V|\sim q,$ then  $ R^*(2\to 4)\lesssim 1$ implies $R^*(p\to r)\lesssim 1$ for all exponents $(p,r)$ satisfying the necessary conditions (\ref{N1}).

\section{Proof of Theorem \ref{Main} }\label{section3}
In this section we shall restate and prove  our main theorem for extension problems.
The proof  is based on the following Bezout's Theorem (see \cite{Co59}) along with the method used to obtain the sharp extension estimates for the parabola in \cite{MT04} and nondegenerate quadratic curves in \cite{IK08}.
\begin{theorem} [Bezout's Theorem]\label{Bezout'sTheorem}
Two algebraic curves of degrees $K_1$ and $K_2$  can not intersect more than $K_1\cdot K_2$ points unless they have a component in common.
\end{theorem}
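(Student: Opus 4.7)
The plan is to reduce the theorem to the classical theory of resultants. Write the two curves as $V_1=\{P=0\}$ and $V_2=\{Q=0\}$ for non-zero polynomials $P,Q\in \mathbb F_q[x_1,x_2]$ of degrees $K_1$ and $K_2$. Passing to the algebraic closure $k=\overline{\mathbb F_q}$ can only enlarge $V_1\cap V_2$, so it suffices to prove the bound there; the hypothesis that $V_1$ and $V_2$ share no component translates into the statement that $P$ and $Q$ have no non-constant common factor in $k[x_1,x_2]$.

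Before invoking the resultant, I would perform a generic linear change of coordinates so that (i) the leading coefficient of $P$ viewed as a polynomial in $x_2$ is a nonzero constant in $x_1$, and likewise for $Q$, and (ii) the projection $\pi\colon (x_1,x_2)\mapsto x_1$ is injective on $V_1\cap V_2$. Condition (i) excludes only the direction $[0:1:0]$ at infinity relative to each curve, and once one knows a priori that $V_1\cap V_2$ is finite, condition (ii) excludes only finitely many directions (one for each pair of intersection points). Since $k$ is infinite, a linear change realizing both conditions simultaneously exists.

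Next, form the resultant $R(x_1)=\mathrm{Res}_{x_2}(P,Q)\in k[x_1]$. The Sylvester-matrix definition gives the degree bound $\deg R\le K_1K_2$. By Gauss's lemma, having no common factor in $k[x_1,x_2]$ implies that $P$ and $Q$ share no common factor in the PID $k(x_1)[x_2]$, so $R$ is a nonzero polynomial in $x_1$ and therefore has at most $K_1K_2$ roots in $k$. For every common zero $(a,b)\in V_1\cap V_2$, specialization at $x_1=a$ is legitimate because (i) keeps the Sylvester matrix from degenerating, and the specialized resultant must vanish since $P(a,x_2)$ and $Q(a,x_2)$ share the root $b$. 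Combining this with the injectivity of $\pi$ from (ii) yields
\[
|V_1\cap V_2|=|\pi(V_1\cap V_2)|\le |\{a\in k: R(a)=0\}|\le K_1K_2,
\]
which is the desired estimate, a fortiori for intersections inside $\mathbb F_q^2$.

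The main obstacle I anticipate is not the algebraic manipulation of the resultant itself but verifying that the generic linear change of coordinates behaves well under specialization; in particular, the Sylvester matrix computing $R(a)$ must have the same size as the one computing $R(x_1)$, which is exactly what hypothesis (i) guarantees. A secondary subtlety is that one cannot always find such a change of coordinates over the small field $\mathbb F_q$, but this is harmlessly circumvented by working over $\overline{\mathbb F_q}$, since any bound on $|V_1\cap V_2|$ valid there is automatically valid over $\mathbb F_q$.
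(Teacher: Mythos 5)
The paper does not actually prove this statement: it is quoted as a classical fact with a citation to Coolidge's treatise, and what the authors use is exactly the weak form you prove (a bound on the number of intersection points, with no multiplicities). Your resultant argument is therefore a self-contained proof where the paper supplies none, and it is the standard one: pass to $k=\overline{\mathbb F_q}$, normalize coordinates so that the Sylvester matrix specializes without degenerating, note that $\mathrm{Res}_{x_2}(P,Q)$ is a nonzero polynomial of degree at most $K_1K_2$ (Gauss's lemma for the nonvanishing, the weighted-degree count on the Sylvester determinant for the bound), and project injectively onto the $x_1$-axis. The one point you should make explicit is the order of quantifiers in your coordinate change: condition (ii) presupposes that $V_1\cap V_2$ is finite, which at that stage you have not yet established, so as written the argument is circular. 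The repair is short and you clearly have it in mind: first choose coordinates satisfying (i) alone; then $R=\mathrm{Res}_{x_2}(P,Q)$ is already a nonzero polynomial of degree at most $K_1K_2$, every intersection point has $x_1$-coordinate among its roots, and for each such root $a$ the polynomial $P(a,x_2)$ has nonzero constant leading coefficient and hence at most $K_1$ roots, so $|V_1\cap V_2|\le K_1^2K_2<\infty$. Only then pick a second generic linear change achieving (ii) and run the sharper count $|V_1\cap V_2|=|\pi(V_1\cap V_2)|\le\deg R\le K_1K_2$. With that reordering the proof is complete and correct, and it is well suited to the paper's setting since it works verbatim over $\overline{\mathbb F_q}$ and hence a fortiori over $\mathbb F_q$.
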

We also need  Schwartz-Zippel Lemma (see \cite{Zi79} and \cite{Sc80}).
\begin{lemma}[Schwartz-Zippel] Let $P(x)\in \mathbb F_q[x_1, x_2] $ be a non zero polynomial with
degree $k$. Then, we have
$$ |\{x\in \mathbb F_q^2: P(x)=0\}|\leq kq.$$
\end{lemma}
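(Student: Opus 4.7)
The plan is to slice the zero set by the second coordinate and reduce to the one-variable fact that a nonzero univariate polynomial of degree $d$ has at most $d$ roots. Viewing $P$ as a polynomial in $x_1$ over $\mathbb F_q[x_2]$, write
$$P(x_1,x_2)=\sum_{i=0}^{d}a_i(x_2)\,x_1^i$$
with $a_d(x_2)\not\equiv 0$, and note that since the total degree of $P$ is $k$, each coefficient $a_i(x_2)$ is a polynomial in $x_2$ of degree at most $k-i$; in particular $\deg(a_d)\le k-d$.

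For each fixed $\beta\in\mathbb F_q$ I would count the $\alpha\in\mathbb F_q$ with $P(\alpha,\beta)=0$ by splitting into two cases. If $a_d(\beta)\ne 0$, then $P(\,\cdot\,,\beta)$ is a nonzero univariate polynomial of degree exactly $d$ in $x_1$, so it has at most $d$ zeros. If instead $a_d(\beta)=0$, then in the worst case $P(\,\cdot\,,\beta)$ could vanish identically, contributing up to $q$ zeros; but since $a_d$ is a nonzero univariate polynomial of degree at most $k-d$, this second case can occur for at most $k-d$ values of $\beta$. Summing the contributions of the two cases,
$$|V|\le d\cdot q+(k-d)\cdot q=k\,q,$$
which is exactly the claimed bound.

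The one point needing separate mention is the degenerate situation $d=0$, in which $P$ is a nonzero polynomial in $x_2$ alone of degree at most $k$, so the zero set is a union of at most $k$ vertical lines of $q$ points each, still giving $|V|\le kq$. I do not anticipate any serious obstacle: the argument is the standard two-variable instance of the Schwartz--Zippel reduction, and all the real work lies in the bookkeeping that distinguishes the total degree $k$ of $P$ from its partial degree $d$ in $x_1$.
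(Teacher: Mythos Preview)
Your argument is correct; this is precisely the standard two-variable Schwartz--Zippel induction, and the bookkeeping on the partial degree $d$ versus the total degree $k$ is handled properly, including the degenerate case $d=0$.

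As for comparison with the paper: there is nothing to compare. The paper does not prove this lemma at all; it merely states it and cites the original references \cite{Zi79} and \cite{Sc80}. Your write-up therefore supplies a proof where the paper gives none, and the argument you give is essentially the classical one from those references specialized to $n=2$.
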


Now, we restate our main theorem for extension problems and give a complete proof.
\vskip 0.1in
\noindent{\bf Theorem \ref{Main}.} {\it
Suppose that $P(x)\in {\mathbb F_q}[x_1,x_2]$ is non-zero polynomial. Define an algebraic variety $V \subset \mathbb F_q^2$ by
$$V=\{x\in {\mathbb F_q^2}: P(x)=0\}.$$
Then, $R^*(2\to 4)\lesssim 1$ if and only if
$|V|\sim q$ and  the polynomial $P(x)$ does not have any linear factor.}

\begin{proof} \textbf{$(\Longrightarrow )$}  Suppose that $L^2-L^4$ extension estimate holds. By contradiction, assume that
$|V|$ is not $\sim q,$ or the polynomial $P(x)$ contains a linear factor.
If $|V|$ is not $\sim q$, then Schwartz-Zippel lemma says that $|V|\sim q^\varepsilon$ for some $0\leq \varepsilon <1.$
From the necessary condition (\ref{N1}), we therefore see that $L^2-L^4$ extension estimate is impossible.
On the other hand, if $P(x)$ has a linear factor, then the variety $V=\{x\in \mathbb F_q^2: P(x)=0\}$ contains a line.
In this case, $L^2-L^4$ extension estimate is also impossible, which is the immediate result from the necessary condition (\ref{N3}). \\
\textbf{ $( \Longleftarrow)$} Suppose that $|V|\sim q$ and  the polynomial $P(x)$ does not have any linear factor.
First, we prove the following key lemma.
\begin{lemma}\label{mainlemma} For each $j=1,2,\dots,n$, let $P_j(x)\in {\mathbb F_q}[x_1,x_2]$ be an irreducible polynomial with degree $\geq 2.$ Then, given each variety $V_j:=\{x\in {\mathbb F_q^2}: P_j(x)=0\}$, we can choose
an element $a_j\in {\mathbb F_q^2}$ such that the following estimate holds:
$$ \sum_{x\in V_j} V_j(a-x) \lesssim 1 \quad \mbox{for all}~~a\in {\mathbb F_q^2}\setminus \{a_j\},$$
where the constant in the estimate depends only on the degree of $V_j.$
\end{lemma}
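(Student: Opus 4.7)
The plan is to interpret the sum $\sum_{x\in V_j} V_j(a-x)$ as the number of common zeros of two plane curves and then invoke Bezout's theorem. Unwinding definitions, the sum equals the cardinality of the intersection of the curves $C_1:P_j(x)=0$ and $C_2:P_j(a-x)=0$, both of degree $k := \deg P_j \geq 2$ and both irreducible (since $P_j$ is). By Bezout (Theorem \ref{Bezout'sTheorem}), whenever $C_1$ and $C_2$ share no component, $|C_1\cap C_2| \leq k^2$, which is exactly the bound sought. The entire task is therefore to identify the single $a_j$ (if any) for which the two curves share a component.

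Since $P_j(x)$ and $P_j(a-x)$ are irreducible of the same degree $k$, they share a component only if one is a scalar multiple of the other, i.e., $P_j(a-x) = c\, P_j(x)$ as a polynomial identity in $x$. Comparing the degree-$k$ homogeneous parts (the top part of $P_j(a-x)$ is $(-1)^k$ times that of $P_j(x)$) forces $c = (-1)^k$. Hence the exceptional $a$'s are precisely the solutions in $\mathbb{F}_q^2$ of the functional equation $P_j(a-x)=(-1)^k P_j(x)$, and the goal reduces to showing at most one such $a$ exists, or else that $V_j=\emptyset$ and the lemma is vacuous.

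To establish this uniqueness, I would assume $a\neq a'$ both satisfy the functional equation, set $b := a'-a \neq 0$, and subtract to obtain $P_j(y+b) = P_j(y)$ identically in $y$. Using the standing hypothesis that $\operatorname{char}(\mathbb{F}_q) > k$, iterating the translation yields $P_j(y+tb)=P_j(y)$ for more than $k$ values of $t$, so as a polynomial in $t$ of degree $\leq k$ it is constant in $t$; in particular its $t^1$-coefficient $b \cdot \nabla P_j$ is the zero polynomial. A linear change of coordinates then shows $P_j(y) = g(L(y))$ for some linear form $L$ with $L(b)=0$ and some univariate $g$ of degree $k$. Irreducibility of $P_j$ forces $g$ irreducible over $\mathbb{F}_q$; being irreducible of degree $k \geq 2$, $g$ has no $\mathbb{F}_q$-roots, so $V_j = \{y : g(L(y))=0\} = \emptyset$. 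In either case, at most one exceptional $a_j$ exists, and Bezout bounds the sum by $k^2$ for every $a \neq a_j$.

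The main obstacle is this last structural step, translating $P_j(y+b)=P_j(y)$ into the factorization $P_j = g(L)$ and then extracting the emptiness of $V_j$. Both the characteristic assumption (to promote the pointwise invariance to a polynomial identity via the derivative argument) and the irreducibility of $P_j$ (to block $g$ from having $\mathbb{F}_q$-roots) are genuinely used. The remaining steps --- Bezout plus the top-degree comparison --- are routine.
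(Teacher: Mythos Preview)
Your proof is correct and follows the same overall skeleton as the paper --- interpret the sum as $|C_1\cap C_2|$, invoke Bezout, and then argue that at most one $a$ can make $P_j(x)$ and $P_j(a-x)$ share a component --- but your handling of the uniqueness step is genuinely different. The paper argues geometrically: the curve $P_j(a-x)=0$ is a reflection-then-translation of $P_j(x)=0$, and since the reflected curve contains no line, two distinct translates of it ``can not be same.'' You instead pass to the polynomial identity $P_j(a-x)=(-1)^k P_j(x)$, subtract two instances to get $P_j(y+b)=P_j(y)$, and then use the standing hypothesis $\operatorname{char}(\mathbb F_q)>k$ to force $b\cdot\nabla P_j\equiv 0$, hence $P_j=g(L)$ with $g$ univariate irreducible of degree $\ge 2$ and $V_j=\emptyset$. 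Your route has two advantages: it makes the role of the characteristic assumption explicit, and it cleanly disposes of the edge case where $P_j$ \emph{is} translation-invariant (e.g.\ $P_j(x)=x_1^2-\alpha$ with $\alpha$ a nonsquare, which has no linear factor yet satisfies $P_j(y+b)=P_j(y)$ for every $b=(0,b_2)$) by showing $V_j$ is then empty so the estimate is vacuous. The paper's geometric sentence is terser but leaves this point implicit; your argument is more self-contained.
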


\begin{proof} Suppose that $P_j(x)$ is an irreducible polynomial with degree $\geq 2.$ Then,
we aim to prove that there exists a point $a_j\in {\mathbb F_q^2}$ such that the total number of intersection points of the curve $P_j(x)=0$ and the curve $P_j(a-x)=0$ is $\lesssim 1$ for all $a\in {\mathbb F_q^2}\setminus \{a_j\}.$ We claim that it suffices to prove that two curves $P_j(x)=0$ and $P_j(a-x)=0$ are different for all $a \in {\mathbb F_q^2}\setminus \{a_j\}.$ To see this, assume we proved that the curves $P_j(x)=0$ and $P_j(a-x)=0$ are different. Then, $P_j(x)$ and $P_j(a-x)$ can not have a common factor, because the polynomial $P_j(x)$ is irreducible. By Bezout's theorem, we therefore see that the total number of intersection points of two curves can not be greater than the product of the degree of $P_j(x)$ and the degree of $P_j(a-x).$ Thus, it remains to prove that two curves $P_j(x)=0$ and $P_j(a-x)=0$ are not same for all $a\in {\mathbb F_q^2}\setminus \{a_j\}.$
Since $P_j(x)$ is an irreducible polynomial with degree $\geq 2,$ it does not have a linear factor which means that the variety $V_j$ does not contain any line. Without loss of generality, we may assume that there exists $a_j\in {\mathbb F_q^2}$ such that two curves $P_j(x)=0$ and $P_j(a_j-x)=0$ are same. Otherwise, there is nothing to prove. To complete the proof, it is enough to show that if $a\ne a_j,$ then the curve $P_j(a-x)=0$ is not same as the curve $P_j(a_j-x)=0.$ To see this, first note that
for each $\alpha\in {\mathbb F_q^2}$, the graph of $P_j(\alpha-x)=0$ can be obtained by reflecting the graph of $P_j(x)=0$ about the origin and then translating the reflected graph by the vector $\alpha.$ Second, note that the curve given by reflecting the graph of $P_j(x)=0$ about the origin does not contain any line,  because the curve $P_j(x)=0$ does not. Thus, two graphs obtained by shifting the reflected graph by two different vectors can not be same. This completes the proof.
\end{proof}
We now give the complete proof of Theorem \ref{Main}. Suppose that $|V|\sim q$ and  the polynomial $P(x)$ does not have any linear factor.
Assume that the polynomial $P(x)\in \mathbb F_q[x_1,x_2]$ is completely factored by
$$ P(x)=C P_1^{l_1}(x) \cdots P_j^{l_j}(x) \cdots P_n^{l_n}(x),$$
where $P_j(x)$ for each $j=1,\dots,n$ is an irreducible polynomial with the degree $\geq 2.$
For each $j=1,2,\dots,n,$ define the variety $V_j\subset \mathbb F_q^2$ as
$$ V_j=\{x\in \mathbb F_q^2: P_j(x)=0 \},$$
where $P_j(x)$ is an irreducible polynomial with degree at least two.
Then, we see that our variety $V\subset \mathbb F_q^2$ is given by $ V=\cup_{j=1}^n V_j.$
In order to show that $R^*(2\to 4)\lesssim 1, $ we shall show that
\begin{equation}\label{firstaim} \|(fd\sigma)^\vee\|_{L^4({\mathbb F_q^2},dm)}^4 \lesssim \|f\|_{L^2(V,d\sigma)}^4,\end{equation}
for all function $f$ defined on the variety $V.$
Notice from Bezout's Theorem that $|V_i\cap V_j|\sim 1 $ for $i\ne j.$
Thus, given a function $f$ supported on $V$, we may write
$$ f(x)\sim \sum_{j=1}^n f_j(x),$$
where $f_j(x)= f(x)V_j(x)$ and we recall that $V_j(x)$ denotes the characteristic function on the variety $V_j.$
In order to prove the mapping property (\ref{firstaim}), it therefore suffices to show that for every $j=1,\dots,n,$
\begin{equation}\label{secondaim} \|(f_jd\sigma)^\vee\|_{L^4({\mathbb F_q^2},dm)}^4 \lesssim \|f\|_{L^2(V,d\sigma)}^4,\end{equation}
for all function $f$ defined on the variety $V.$
Recall from (\ref{measuredef1}) that the normalized measure $d\sigma$ on $V$ is just a function given by
$$ d\sigma(x)= \frac{q^2}{|V|}V(x).$$
For each $j=1,\dots,n$, define a measure $d\sigma_j$ supported on $V_j$ by
\begin{equation}\label{measuredef1*}d\sigma_j(x)= \frac{q^2}{|V|} V_j(x).\end{equation}
Then, we see that $f_jd\sigma= f_jd\sigma_j.$
From the definition of norms and the Plancherel theorem , we see that for each $j=1,\dots,n,$
\begin{align*} \|(f_jd\sigma)^\vee\|_{L^4({\mathbb F_q^2},dm)}^4=&\|(f_jd\sigma_j)^\vee\|_{L^4({\mathbb F_q^2},dm)}^4\\
=&\|[(f_jd\sigma_j)^\vee]^2\|_{L^2({\mathbb F_q^2},dm)}^2 =\|f_jd\sigma_j\ast f_jd\sigma_j\|_{L^2({\mathbb F_q^2},dx)}^2.\end{align*}

Choose the $a_j\in {\mathbb F_q^2}$ as in Lemma \ref{mainlemma} and write
$$\|f_jd\sigma_j\ast f_jd\sigma_j\|_{L^2({\mathbb F_q^2},dx)}^2= \frac{1}{q^2}\left|f_jd\sigma_j \ast f_jd\sigma_j(a_j)\right|^2
+\frac{1}{q^2} \sum_{x\in {\mathbb F_q^2}\setminus \{a_j\}} \left|f_jd\sigma_j \ast f_jd\sigma_j(x)\right|^2 $$
$$=\mbox{I} +\mbox{II},$$
where we recall that $``dx"$ is the normalized counting measure.
Therefore, our task is to prove that both $\mbox{I}$ and $\mbox{II}$ are $\lesssim \|f\|_{L^2(V,d\sigma)}^4.$
From $(\ref{measuredef1*})$ and  Young's inequality , observe that
\begin{align*} \left|f_jd\sigma_j \ast f_jd\sigma_j(a_j)\right|&\leq \|f_jd\sigma_j \ast f_jd\sigma_j \|_{L^\infty(\mathbb F_q^2, dx)}\leq \frac{q^4}{|V|^2} \|f_j\cdot V_j\|_{L^2({\mathbb F_q^2},dx)}^2\\
&\leq  \frac{q^4}{|V|^2} \|f\cdot V\|_{L^2({\mathbb F_q^2},dx)}^2 =\frac{q^2}{|V|} \|f\|_{L^2(V,d\sigma)}^2.
\end{align*}
Since $|V|\sim q,$ this implies that $ {\mbox{I}}\lesssim \|f\|_{L^2(V,d\sigma)}^4$ as required.
It remains to prove that ${\mbox{II}}\lesssim \|f\|_{L^2(V,d\sigma)}^4.$ Without loss of generality, we may assume that $f\geq 0$ and so $f_j\geq 0.$  By the Cauchy-Schwarz inequality, we see that for every $x\in {\mathbb F_q^2},$
\begin{equation}\label{E1}\left(f_jd\sigma_j \ast f_jd\sigma_j \right)^2(x) \leq (d\sigma_j\ast d\sigma_j)(x) \cdot(f_j^2 d\sigma_j \ast f_j^2 d\sigma_j)(x).\end{equation}
From $(\ref{measuredef1*})$ and the definition of the convolution of functions, observe that for each $x\in {\mathbb F_q^2},$
$$d\sigma_j\ast d\sigma_j(x)= \frac{q^2}{|V|^2}\sum_{y\in V_j} V_j(x-y).$$
By  Lemma \ref{mainlemma}, we therefore see that if $x\in {\mathbb F_q^2}\setminus \{a_j\},$ then
$$ d\sigma_j\ast d\sigma_j(x) \lesssim 1,$$
where we also used the fact that $|V|\sim q.$
Putting this together with $(\ref{E1})$, we obtain that for every $x\in {\mathbb F_q^2}\setminus \{a_j\},$
$$ \left(f_jd\sigma_j \ast f_jd\sigma_j \right)^2(x) \lesssim (f_j^2 d\sigma_j \ast f_j^2 d\sigma_j)(x).$$
Thus, we conclude that
\begin{align*}{\mbox{II}}&= \frac{1}{q^2} \sum_{x\in {\mathbb F_q^2}\setminus \{a_j\}} \left|f_jd\sigma_j \ast f_jd\sigma_j(x)\right|^2\lesssim \frac{1}{q^2} \sum_{x\in {\mathbb F_q^2}}(f_j^2 d\sigma_j \ast f_j^2 d\sigma_j)(x)\\
& =\frac{1}{|V|^2} \left(\sum_{y\in \mathbb F_q^d} f_j^2(y) V_j(y)\right)^2 \leq \|f^2\|^2_{L^1(V,d\sigma)}= \|f\|_{L^2(V,d\sigma)}^4 \end{align*}
where the first equality in the second line follows immediately from Fubini's theorem. This completes the proof of Theorem \ref{Main}.

\end{proof}

\section{Distances between two sets}\label{section4}
In this section, we shall prove Theorem \ref{sharpwolff1} for the Falconer distance problem in two dimensions.

\subsection{ Key estimates for the proof of Theorem \ref{sharpwolff1}}
The proof of Theorem \ref{sharpwolff1} calls for lots of estimates which are related to discrete Fourier analysis.
Here, we collect useful lemmas needed to complete the proof of Theorem \ref{sharpwolff1}.
Given $t\in \mathbb F_q$ and  a polynomial $P(x)\in \mathbb F_q[x_1,x_2]$,  define a variety $V_t$ by
\begin{equation}\label{defV}V_t=\{x\in \mathbb F_q^2: P(x)=t\}.\end{equation}
 Then, we have the following lemma.
\begin{lemma}\label{keylemma1}
If  $P(x)=a_1x_1^d+a_2x_2^d \in \mathbb F_q[x_1,x_2]$ of degree $d\geq 2$, and $a_1, a_2 \in \mathbb F_q\setminus \{0\},$  then we have
$$ \sum_{t\in \mathbb F_q} \widehat{V_t}(m) |V_t| \lesssim 1 \quad \mbox{for all} ~~m\in \mathbb F_q^2\setminus \{(0,0)\}.$$
where $\widehat{V_t}$ is the Fourier transform of the characteristic function on the variety $V_t$  defined by
 $\widehat{V_t}(m) =\frac{1}{q^2} \sum_{x\in \mathbb F_q^2} \chi(-x\cdot m) V_t(x).$
\end{lemma}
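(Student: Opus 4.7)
My plan is to expand the sum, use orthogonality to linearize the constraint $P(x)=P(y)$, exploit the diagonal structure of $P(x)=a_1x_1^d+a_2x_2^d$ to factor the resulting exponential sum as a product of one-variable sums, and then apply Weil's bound on each factor.

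First I would write
\begin{equation*}
S:=\sum_{t\in\mathbb F_q}\widehat{V_t}(m)|V_t|
=\frac{1}{q^2}\sum_{t\in\mathbb F_q}\sum_{\substack{x:P(x)=t}}\sum_{\substack{y:P(y)=t}}\chi(-x\cdot m)
=\frac{1}{q^2}\sum_{x,y\in\mathbb F_q^2}\mathbb 1[P(x)=P(y)]\,\chi(-x\cdot m),
\end{equation*}
collapsing the sum over $t$. Next I would replace the indicator by the standard orthogonality identity $\mathbb 1[P(x)=P(y)]=q^{-1}\sum_{s\in\mathbb F_q}\chi(s(P(x)-P(y)))$, obtaining
\begin{equation*}
S=\frac{1}{q^3}\sum_{s\in\mathbb F_q}\Bigl(\sum_{x\in\mathbb F_q^2}\chi\bigl(sP(x)-x\cdot m\bigr)\Bigr)\Bigl(\sum_{y\in\mathbb F_q^2}\chi(-sP(y))\Bigr).
\end{equation*}
The $s=0$ contribution vanishes, because it equals $q^{-3}\bigl(\sum_x\chi(-x\cdot m)\bigr)q^2$ and the inner Gauss sum is $0$ by $m\neq(0,0)$.

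For $s\neq 0$, the diagonal shape of $P$ factors both bracketed sums into one-dimensional sums:
\begin{equation*}
S=\frac{1}{q^3}\sum_{s\neq 0}A_1(s)A_2(s)B_1(s)B_2(s),
\end{equation*}
where, for $i=1,2$, $A_i(s)=\sum_{u\in\mathbb F_q}\chi(sa_iu^d-m_iu)$ and $B_i(s)=\sum_{u\in\mathbb F_q}\chi(-sa_iu^d)$. Since $sa_i\neq 0$ and the polynomials $sa_iu^d-m_iu$ and $-sa_iu^d$ have degree exactly $d\ge 2$, and since the characteristic of $\mathbb F_q$ exceeds $d$ (so $\gcd(d,p)=1$), Weil's bound for complete exponential sums gives $|A_i(s)|,|B_i(s)|\le (d-1)\sqrt{q}$ uniformly in $s\neq 0$.

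Combining these four estimates yields
\begin{equation*}
|S|\;\le\;\frac{1}{q^3}\cdot(q-1)\cdot\bigl((d-1)\sqrt{q}\bigr)^4\;\lesssim\;\frac{q\cdot q^2}{q^3}=1,
\end{equation*}
which is the desired bound. The only step that requires any care is the application of Weil's bound: I need the degree-versus-characteristic hypothesis (already in force throughout the paper) to be sure the polynomial $sa_iu^d-m_iu$ is not degenerate mod $p$. No other obstacle is expected, since the diagonal form of $P$ is precisely what permits the triple sum to decouple; if one wanted to extend this lemma to more general curves (as hinted in Remark \ref{address}), keeping the product structure would be the central difficulty.
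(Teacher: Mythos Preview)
Your argument is correct, and it takes a genuinely different route from the paper's own proof. The paper writes $|V_t|=q+R_t$, uses the absolute irreducibility of $P(x)-t$ for $t\ne 0$ to get $R_t=O(q^{1/2})$, and invokes Katz's theorem (Theorem~\ref{Katzthm}) to bound the character sum $\sum_{x\in V_t}\chi(-x\cdot m)=O(q^{1/2})$; then the leading term $q\sum_x\chi(-x\cdot m)$ vanishes for $m\ne(0,0)$ and the remainder is $O(q^2)$. You instead linearize the condition $P(x)=P(y)$ by orthogonality, exploit the diagonal shape $P(x)=a_1x_1^d+a_2x_2^d$ to factor the resulting two-variable exponential sums into products of one-variable sums, and apply the classical one-dimensional Weil bound to each factor. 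Your route is the more elementary one: it needs only the one-variable Weil estimate, not Katz's two-dimensional result nor the irreducibility input from \cite{Todd}. On the other hand, the paper's decomposition $|V_t|=q+R_t$ makes transparent exactly which structural facts about $V_t$ drive the bound (size and Fourier decay separately), which is useful if one hopes to replace the diagonal curve by a more general one along the lines of Remark~\ref{address}; as you yourself note, your factorization is tied to the diagonal form and would not survive that generalization.
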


Before we proceed to prove Lemma \ref{keylemma1}, we recall some well known facts related to the polynomial $P(x)=a_1x_1^d+a_2x_2^d.$ Lemma 1 in \cite{Todd} says that the polynomial $P(x)-t$ is irreducible for any $t\in \mathbb F_q\setminus \{0\}.$ From Theorem 6.37 in \cite{LN93}, we see that
for every $t\in \mathbb F_q \setminus \{0\},$
\begin{equation}\label{weilsize}
|V_t|=|\{x\in \mathbb F_q^2: P(x)=t\}| = q + O(q^\frac{1}{2}).
\end{equation}
We also recall a theorem by N. Katz (\cite{Ka80}).
\begin{theorem}\label{Katzthm}
Given a polynomial $\Lambda(x) \in \mathbb F_q[x_1,x_2],$ assume that the polynomial $\Lambda(x)$ does not contain a linear factor. Then, for any $m \in \mathbb F_q^2\setminus \{(0,0)\}$ we have
$$\left|\sum_{x \in V}\chi(x\cdot m)\right| \lesssim q^\frac{1}{2},$$
where $V=\{x\in \mathbb F_q^2 : \Lambda(x)=0\}.$
\end{theorem}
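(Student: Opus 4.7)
The plan is to reduce to the case of a single irreducible component via factoring plus Bezout, and then apply Weil's character-sum bound on each. First I would factor
$$\Lambda(x) = c \prod_{j=1}^n \Lambda_j(x)^{e_j},$$
where the hypothesis that $\Lambda$ has no linear factor forces every $\Lambda_j$ to have degree $d_j \geq 2$. Setting $V_j = \{x \in \mathbb F_q^2 : \Lambda_j(x) = 0\}$, we have $V = \bigcup_{j=1}^n V_j$, and Bezout's Theorem gives $|V_i \cap V_j| \leq d_i d_j = O(1)$ for $i \neq j$. Inclusion-exclusion therefore reduces the claim to showing
$$\left|\sum_{x \in V_j} \chi(x \cdot m)\right| \lesssim q^{1/2}$$
for each irreducible component separately, with implicit constant depending only on $d_j$.

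Next I would argue that the linear form $\ell(x) = m \cdot x$ is non-constant on the geometric curve $V_j$: otherwise $V_j$ would lie in the line $\{\ell = c\}$, forcing the irreducible $\Lambda_j$ to divide the linear polynomial $\ell - c$, which is impossible since $\deg \Lambda_j \geq 2$ while $\ell - c \not\equiv 0$ (using $m \neq (0,0)$). With this in hand I would pass to the normalization $\tilde C_j$ of the projective closure of $V_j$: it is a smooth projective irreducible curve whose geometric genus $g_j$ is bounded by the plane-curve formula $g_j \leq (d_j - 1)(d_j - 2)/2$, and $\ell$ extends to a non-constant morphism $\tilde\ell \colon \tilde C_j \to \mathbb P^1$ of degree at most $d_j$.

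Then the Weil Riemann hypothesis for smooth projective curves, applied to the Artin-Schreier cover attached to $\chi \circ \tilde\ell$, yields
$$\left|\sum_{P \in \tilde C_j(\mathbb F_q),\ \tilde\ell(P) \neq \infty} \chi(\tilde\ell(P))\right| \leq (2 g_j + 2 d_j - 2)\, q^{1/2} \lesssim q^{1/2}.$$
The discrepancy between this sum and $\sum_{x \in V_j(\mathbb F_q)} \chi(m \cdot x)$ comes from the singular points of $V_j$ (whose $\mathbb F_q$-fibres in $\tilde C_j$ are counted with the wrong multiplicity) together with the finitely many points at infinity, both of size $O(d_j^2) = O(1)$. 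Summing over $j$ and absorbing the Bezout correction from the first step completes the argument.

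The main technical obstacle is handling irreducible components that are singular, since Weil's Riemann hypothesis is stated for smooth projective curves: one has to resolve to the normalization and control the $O(1)$ discrepancy uniformly in the degree of $\Lambda$. The no-linear-factor hypothesis plays a double role here, ensuring both that every $\Lambda_j$ has degree at least $2$ (so the genus bound is finite and the Bezout step applies) and that $\tilde\ell$ is a non-trivial covering, so that no component contributes an unbounded main term $|V_j|$. Katz's paper \cite{Ka80} packages exactly this bookkeeping once and for all, providing the clean character-sum input stated in the theorem.
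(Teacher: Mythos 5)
The paper does not actually prove this statement: it is quoted as a theorem of Katz with \cite{Ka80} as the reference and used as a black box in the proof of Lemma \ref{keylemma1}, so there is no in-paper argument to compare yours against. What you have written is a reconstruction of the standard proof, and it is essentially the right one --- reduce to irreducible components by Bezout, then apply the Weil/Bombieri bound for character sums along a curve via the Artin--Schreier cover attached to $\chi\circ\tilde\ell$. Two points deserve explicit attention before the sketch is complete. First, an $\mathbb F_q$-irreducible factor $\Lambda_j$ need not be absolutely irreducible; in that case the normalization of $V_j$ is not geometrically connected and the Weil bound does not apply in the form you state. But then $\Lambda_j$ splits over $\overline{\mathbb F_q}$ into at least two distinct conjugate factors, every $\mathbb F_q$-rational point of $V_j$ lies on the intersection of two of them, and Bezout gives $|V_j(\mathbb F_q)| = O(d_j^2)$, so the sum over that component is trivially $O(1)$; you should separate this case out. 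Second, the Weil bound requires that $\chi\circ\tilde\ell$ define a nontrivial Artin--Schreier extension, i.e.\ that $\tilde\ell$ not be of the form $h^p - h + c$ for a rational function $h$ on $\tilde C_j$; if it were, then $\deg\tilde\ell \geq p$, which is ruled out by $\deg\tilde\ell \leq d_j$ together with the paper's standing hypothesis that the characteristic exceeds the degree of the variety. With those two remarks added, your argument is a correct proof of the stated bound, with the implied constant depending only on $\deg\Lambda$ as required.
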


\begin{proof} We prove Lemma \ref{keylemma1}.
First, let us write $|V_t| = q + R_t.$
From (\ref{weilsize}),  we have $R_t = O(q^{1/2})$ for $t \ne 0.$
Moreover, it is clear from Schwartz-Zippel lemma that  $R_0 = O(q).$ Using Theorem \ref{Katzthm}, we see that for $t \ne 0$ and $m\ne (0,0),$
 $$\sum_{x \in \mathbb F_q^2 : P(x) = t}  \chi(-x\cdot m) = O(q^\frac{1}{2}).$$
If $t=0$, then we use Schwartz-Zippel lemma to bound
$$\sum_{x \in \mathbb F_q^2 : P(x) = 0}  \chi(-x \cdot m) = O(q).$$
Therefore, we obtain
$$\sum_{t \in \mathbb F_q} |V_t| \sum_{x \in \mathbb F_q^2 : P(x) = t}  \chi(-x\cdot m)$$
$$= q \sum_{t \in \mathbb F_q}  \sum_{x \in \mathbb F_q^2 : P(x) = t}  \chi(-x\cdot m)
+ \sum_{t \in \mathbb F_q} R_t \sum_{x \in \mathbb F_q^2 : P(x) = t}  \chi(-x\cdot m).$$
Now, the first sum vanishes because $m\neq (0,0),$ and the second sum
is written by
$$\sum_{t \ne 0} R_t \sum_{x \in \mathbb F_q^2 : P(x) = t}  \chi(-x \cdot m)+  R_0 \sum_{x \in \mathbb F_q^2 : P(x) = 0}  \chi(-x \cdot m),$$
which is bounded by
$$O(q q^{1/2} q^{1/2} + qq) = O(q^2),$$
which in turn shows that
$$\sum_{t \in \mathbb F_q}\widehat V_t(m)|V_t| \lesssim 1.$$
\end{proof}

In particular, we can take the polynomial $P(x)$ as $\|x\|_2=x_1^2+x_2^2.$
In this case, the variety $V_t$ in (\ref{defV}) is called as a circle with radius $t\in \mathbb F_q$ and we can observe some specific properties on the circle $V_t.$ For example, in \cite{IK10} the Fourier transform on $V_t$ is given by the formula
\begin{equation}\label{explicitcircleF} \widehat{V_t}(m)=q^{-2} \sum_{x\in \mathbb F_q^2} \chi(-m\cdot x) V_t(x)= q^{-1}\delta_0(m) +q^{-3} G_1^2 \sum_{s\in \mathbb F_q\setminus \{0\}}
\chi\left( \frac{\|m\|_2}{4s}+ st\right),\end{equation}
where $\delta_0(m)=1$ if $m=(0,0)$ and $\delta_0(m)=0 $ if $m\ne (0,0),$ and we denote by $G_1$ the usual Gauss sum.
It is known that the Gauss sum $G_1$ is explicitly computed. If $\eta$ is the quadratic character of $\mathbb F_q$ and $\chi$ is the canonical additive character of $\mathbb F_q$, then the Gauss sum $G_1 =\sum_{t\neq 0} \eta(t)\chi(t)$ takes the following value (see Theorem 5.15 in \cite{LN93}):

$$G_1= \left\{\begin{array}{ll}  {(-1)}^{k-1} q^{\frac{1}{2}} \quad &\mbox{if} \quad p =1 \,\,(mod~4) \\
                    {(-1)}^{k-1} i^k q^{\frac{1}{2}} \quad &\mbox{if} \quad p =3 \,\,(mod~4).\end{array}\right., $$
where $k$ is a natural number and $p$ is an odd prime with $q=p^k.$  Thus, if $q=1~~(mod~4)$, then the square of the Gauss sum $G_1$ is exactly $q.$
From this observation and (\ref{explicitcircleF}), we see that if  $q=1~~(mod~4), m\in \mathbb F_q^2,$ and $V_t =\{x\in \mathbb F_q^2: \|x\|_2=t\}$, then
\begin{equation}\label{squarefourier}
\widehat{V_t}(m)=q^{-1}\delta_0(m) +q^{-2} \sum_{s\in \mathbb F_q\setminus \{0\}}
\chi\left( \frac{\|m\|_2}{4s}+ st\right).
\end{equation}
However, observe that $G_1^4$ is always $q^2,$ which yields the following lemma.
\begin{lemma}\label{doubleFourierdecay}
For each $t\in \mathbb F_q,$ let $V_t=\{x\in \mathbb F_q^2: \|x\|_2=t\}.$ For each $m, \xi \in \mathbb F_q^2\setminus \{(0,0)\},$ we have
$$\sum_{t\in \mathbb F_q} \widehat{V_t}(m) \widehat{V_t}(\xi)=q^{-3} \sum_{s\in \mathbb F_q \setminus \{0\}} \chi\left( s(\|m\|_2-\|\xi\|_2)\right).$$
\end{lemma}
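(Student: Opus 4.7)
The plan is to expand both Fourier transforms using the explicit formula (\ref{explicitcircleF}), multiply them out, and perform the sum over $t$ first to collapse one of the auxiliary variables, after which the Gauss sum factors and the $4s$-in-denominator variables can be repackaged as a linear character sum.

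First I would invoke the hypothesis $m, \xi \neq (0,0)$ to kill the $q^{-1}\delta_0$ terms in (\ref{explicitcircleF}), leaving
\begin{equation*}
\widehat{V_t}(m)\,\widehat{V_t}(\xi) \;=\; q^{-6}\,G_1^{4}\sum_{s\neq 0}\sum_{s'\neq 0}\chi\!\left(\frac{\|m\|_2}{4s}+\frac{\|\xi\|_2}{4s'}+(s+s')t\right).
\end{equation*}
Next I would swap the order of summation so that the sum over $t \in \mathbb F_q$ hits only the factor $\chi((s+s')t)$, producing $q\cdot\mathbf 1_{s'=-s}$ by the orthogonality relation recalled in Section~\ref{section2}. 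This collapses the double sum to
\begin{equation*}
\sum_{t\in\mathbb F_q}\widehat{V_t}(m)\widehat{V_t}(\xi)\;=\;q^{-5}G_1^{4}\sum_{s\neq 0}\chi\!\left(\frac{\|m\|_2-\|\xi\|_2}{4s}\right).
\end{equation*}

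At this point I would plug in the identity $G_1^{4}=q^{2}$ noted just before the lemma (valid for every odd $q$, independently of the residue class of $q$ modulo $4$), reducing the prefactor to $q^{-3}$. The final step is a change of variable in the remaining character sum: since the characteristic of $\mathbb F_q$ is odd, the map $s\mapsto 1/(4s)$ is a bijection of $\mathbb F_q\setminus\{0\}$ onto itself, so substituting $u=1/(4s)$ and relabelling $u$ as $s$ yields
\begin{equation*}
\sum_{t\in\mathbb F_q}\widehat{V_t}(m)\widehat{V_t}(\xi)\;=\;q^{-3}\sum_{s\neq 0}\chi\bigl(s(\|m\|_2-\|\xi\|_2)\bigr),
\end{equation*}
which is exactly the claimed identity.

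There is no real obstacle here; the computation is a direct bookkeeping exercise using the explicit Fourier formula for circles, orthogonality of the additive character, and the elementary Gauss-sum identity $G_1^{4}=q^{2}$. The only point that deserves care is to check that the formula (\ref{explicitcircleF}) applies uniformly over \emph{all} $t\in\mathbb F_q$ (including $t=0$) so that the sum $\sum_t$ is legitimate, and to make sure that the substitution $s\mapsto 1/(4s)$ is valid, which uses the standing assumption that $q$ is a power of an odd prime.
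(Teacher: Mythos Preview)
Your proposal is correct and follows essentially the same approach as the paper's proof: both expand via (\ref{explicitcircleF}), use orthogonality in $t$ to force $s'=-s$, invoke $G_1^{4}=q^{2}$, and finish with the substitution $s\mapsto 1/(4s)$. The only cosmetic difference is that the paper substitutes $G_1^{4}=q^{2}$ at the outset (writing the prefactor as $q^{-4}$ immediately) whereas you carry $G_1^{4}$ through and substitute at the end.
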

\begin{proof} Since $m, \xi \neq (0,0)$ and $G_1^4=q^2,$ the estimate (\ref{explicitcircleF}) implies that
\begin{align*}\sum_{t\in \mathbb F_q} \widehat{V_t}(m) \widehat{V_t}(\xi)=&q^{-4}\sum_{s, s^\prime \in \mathbb F_q\setminus \{0\}}
\chi\left( \frac{\|m\|_2}{4s}+\frac{\|\xi\|_2}{4s^\prime}\right) \sum_{t\in \mathbb F_q} \chi( (s+s^\prime)t)\\
=& q^{-3} \sum_{s \in \mathbb F_q\setminus \{0\}}
\chi\left( \frac{\|m\|_2}{4s}-\frac{\|\xi\|_2}{4s}\right),
\end{align*}
where the last line follows from the orthogonality relation of $\chi.$
Using a change of variables, $1/(4s) \to s$, we complete the proof.
\end{proof}

Let $E, F\subset \mathbb F_q^2.$ We now consider the counting function $\nu: \mathbb F_q \to {\bf N} \cup \{0\},$ given by
$$\nu(t) =|\{(x,y)\in E\times F: \|x-y\|_2=t\}|.$$
In particular, we have the following lemma.
\begin{lemma}\label{zerodistance}
Let $E, F\subset \mathbb F_q^2$. If $|E||F|\gtrsim q^2$ and $q=1~~(mod~4),$  then we have
$$ \nu(0)=O(q^{-1}|E||F|) +q^3 \sum_{m\in V_0} \overline{\widehat{E}}(m) \widehat{F}(m),$$
\end{lemma}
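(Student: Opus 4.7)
My plan is to expand $\nu(0)$ by Fourier inversion, then use the explicit Gauss sum formula (\ref{squarefourier}) for $\widehat{V_0}$ that becomes available when $q\equiv 1\pmod 4$, and finally absorb the leftover terms into the error using the hypothesis $|E||F|\gtrsim q^2$.

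First I would write $\nu(0)=\sum_{x,y\in\mathbb F_q^2}E(x)F(y)V_0(x-y)$ and apply Fourier inversion (\ref{inversionF}) to $V_0(x-y)$. Since $E$ and $F$ are real-valued, the two inner sums $\sum_x E(x)\chi(m\cdot x)$ and $\sum_y F(y)\chi(-m\cdot y)$ equal $q^2\,\overline{\widehat E(m)}$ and $q^2\,\widehat F(m)$ respectively, producing
$$\nu(0)=q^4\sum_{m\in\mathbb F_q^2}\widehat{V_0}(m)\,\overline{\widehat E(m)}\,\widehat F(m).$$

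Next I would use (\ref{squarefourier}) with $t=0$ to evaluate $\widehat{V_0}(m)$ in closed form. The inner Gauss-type sum $\sum_{s\neq 0}\chi(\|m\|_2/(4s))$ is elementary: the substitution $u=1/(4s)$ (a bijection on $\mathbb F_q^*$) turns it into $\sum_{u\neq 0}\chi(u\|m\|_2)$, which equals $q-1$ when $\|m\|_2=0$ and $-1$ otherwise by orthogonality of $\chi$. Collecting cases gives the clean decomposition
$$\widehat{V_0}(m)= -q^{-2} + q^{-1}V_0(m) + q^{-1}\delta_0(m),$$
where $V_0(m)$ is the characteristic function of the null cone. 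Substituting this into the expression for $\nu(0)$ splits the sum into three pieces: the $q^{-1}V_0(m)$ piece gives precisely the main term $q^3\sum_{m\in V_0}\overline{\widehat E(m)}\widehat F(m)$; the $-q^{-2}$ piece collapses via Plancherel (\ref{plancherel}) to $-|E\cap F|$; and the $q^{-1}\delta_0(m)$ piece contributes $q^3\overline{\widehat E(0)}\widehat F(0)=|E||F|/q$ since $\widehat E(0)=|E|/q^2$ and $\widehat F(0)=|F|/q^2$. Therefore
$$\nu(0)=q^3\sum_{m\in V_0}\overline{\widehat E(m)}\,\widehat F(m)+\frac{|E||F|}{q}-|E\cap F|.$$

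The only remaining step is to check that the error $|E||F|/q-|E\cap F|$ is $O(q^{-1}|E||F|)$. Since $|E\cap F|\leq\min(|E|,|F|)\leq\sqrt{|E||F|}$, the hypothesis $|E||F|\gtrsim q^2$ yields $\sqrt{|E||F|}\lesssim |E||F|/q$, and hence $|E\cap F|\lesssim|E||F|/q$. I expect no real obstacle here; the one point worth flagging is that $0\in V_0$, so the zero-frequency contribution is already embedded in the main-term sum, which is exactly why only the two lower-order terms $|E||F|/q$ and $-|E\cap F|$ remain to be absorbed into the error.
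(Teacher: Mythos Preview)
Your proof is correct and follows essentially the same route as the paper's: Fourier inversion on $V_0(x-y)$, then the explicit formula (\ref{squarefourier}) for $\widehat{V_0}$, then absorbing the lower-order terms using $|E||F|\gtrsim q^2$. The only cosmetic difference is that you evaluate the cross term $-q^2\sum_m\overline{\widehat E(m)}\widehat F(m)$ exactly as $-|E\cap F|$ via Parseval before bounding it by $\sqrt{|E||F|}$, whereas the paper bounds that same sum directly by Cauchy--Schwarz and Plancherel to reach the identical bound $|E|^{1/2}|F|^{1/2}$.
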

where $V_0=\{x\in \mathbb F_q^2: \|x\|_2=0\}.$
\begin{proof}
It follows that
$$\nu(0)= \sum_{x,y\in \mathbb F_q^2}E(x)F(y) V_0(x-y).$$
Applying the Fourier inversion theorem (\ref{inversionF}) to $V_0(x-y)$ and using the definition of the Fourier transform, we have
$$ \nu(0)=q^4 \sum_{m\in \mathbb F_q^2} \overline{\widehat{E}}(m) \widehat{F}(m) \widehat{V_0}(m).$$
Since $q=1~~(mod ~4)$, the formula (\ref{squarefourier}) can be used to observe the following:
$$ \nu(0) = q^4 \sum_{m\in \mathbb F_q^2} \overline{\widehat{E}}(m) \widehat{F}(m) \left( q^{-1}\delta_0(m) +q^{-2}  \sum_{s\in \mathbb F_q\setminus \{0\}} \chi\left( \frac{\|m\|_2}{4s}\right) \right)$$
$$= q^3 \overline{\widehat{E}}(0,0) \widehat{F}(0,0) + q^2\sum_{m\in \mathbb F_q^2} \overline{\widehat{E}}(m) \widehat{F}(m)\sum_{s\in \mathbb F_q\setminus \{0\}} \chi\left( \frac{\|m\|_2}{4s}\right).$$
Computing the sum over $s\in \mathbb F_q\setminus \{0\}$, it follows that
$$\nu(0)= q^{-1}|E||F| + q^3 \sum_{\|m\|_2=0} \overline{\widehat{E}}(m) \widehat{F}(m) - q^2 \sum_{m\in \mathbb F_q^2} \overline{\widehat{E}}(m) \widehat{F}(m).$$
By the Cauchy-Schwarz inequality and the Plancherel theorem (\ref{plancherel}), the absolute value of the third term above is less than equal to $|E|^{\frac{1}{2}}|F|^{\frac{1}{2}}.$ Since $|E||F|\gtrsim q^2$, the first term dominates the third term and the proof is complete.
\end{proof}

We now address the most important lemma for the proof of Theorem \ref{sharpwolff1}.
The following lemma below may be hard to obtain if we use the direct computation, in part because we do not know the explicit form of the variety.
Using the dual extension theorem, we can overcome the problem.
\begin{lemma}\label{restriction} Let $\Gamma(x)\in \mathbb F_q[x_1,x_2]$ be a non-zero polynomial.
For each $t\in \mathbb F_q$, let $V_t=\{x\in \mathbb F_q^2: \Gamma(x)=t\}.$
Suppose that a set $T\subset \mathbb F_q$ satisfies the following conditions:
if $t\in T$, then $|V_t|\sim q$ and $\Gamma(x)-t$ does not contain a linear factor.
Then, we have that for every set $H\subset \mathbb F_q^2$,
\begin{equation}\label{bigresult} \max_{t\in T} \sum_{m\in V_t} |\widehat{H}(m)|^2 \lesssim q^{-3} |H|^{\frac{3}{2}},\end{equation}
where we recall that $\widehat{H}(m) =q^{-2} \sum_{x\in H} \chi(-x\cdot m)$ and the constant in the estimate depends only on the degree of $V_t$ and on the ratio $|V_t|/q.$
In particular, if $\Gamma(x)=\|x\|_2 =x_1^2+x_2^2,$ then above conclusion holds with $T=\mathbb F_q\setminus \{0\}.$
\end{lemma}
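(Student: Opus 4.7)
The strategy is to apply the dual (restriction) form of Theorem~\ref{Main}, but with the physical space $(\mathbb F_q^2,dx)$ and the frequency space $(\mathbb F_q^2,dm)$ interchanged. The quantity $\sum_{m\in V_t}|\widehat H(m)|^{2}$ is naturally the (unnormalized) squared $L^2$-norm of $\widehat H$ restricted to $V_t\subset(\mathbb F_q^2,dm)$, which by duality is precisely the quantity controlled by an $L^2\to L^4$ extension estimate for $V_t$ sitting inside the frequency space.

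For each $t\in T$, the variety $V_t$ satisfies the hypotheses of Theorem~\ref{Main}, and the proof of that theorem in Section~\ref{section3} --- resting only on the geometric Lemma~\ref{mainlemma} and on Plancherel --- goes through verbatim with $V_t$ regarded as a subset of $(\mathbb F_q^2,dm)$. The one change is a factor $|V_t|^{-2}$ on the right-hand side of the fourth-power extension inequality: it appears because the frequency counting measure $dm$ has total mass $q^2$ while the physical measure $dx$ has mass $1$, so the normalized surface measure on $V_t\subset(\mathbb F_q^2,dm)$ is $d\tilde\sigma_t(m)=V_t(m)/|V_t|$ without the $q^2$ factor present in $d\sigma$. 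Tracking the convolution/Cauchy--Schwarz step of Section~\ref{section3} with this normalization, one obtains
\begin{equation}\label{swapextension}
\|(fd\tilde\sigma_t)^{\wedge}\|_{L^4(\mathbb F_q^2,dx)}^{4}\;\lesssim\;|V_t|^{-2}\,\|f\|_{L^2(V_t,d\tilde\sigma_t)}^{4},
\end{equation}
where $(fd\tilde\sigma_t)^{\wedge}(x)=|V_t|^{-1}\sum_{m\in V_t}\chi(-x\cdot m)f(m)$. Dualizing \eqref{swapextension} exactly as in \eqref{defrestriction}, the $|V_t|^{-1}$ coming from $\|\cdot\|_{L^2(V_t,d\tilde\sigma_t)}^{2}=|V_t|^{-1}\sum_{m\in V_t}|\cdot|^{2}$ cancels the $|V_t|^{-1}$ produced by the operator norm in \eqref{swapextension}, leaving the clean estimate
$$\sum_{m\in V_t}|\widehat h(m)|^{2}\;\lesssim\;\|h\|_{L^{4/3}(\mathbb F_q^2,dx)}^{2}$$
for every $h$ on $(\mathbb F_q^2,dx)$. (The reflection $m\mapsto -m$ arising in the adjoint $T^{\ast}h(m)=\widehat h(-m)$ is absorbed via $|\widehat h(-m)|=|\widehat h(m)|$ for real-valued $h$, or equivalently by noting that $-V_t$ satisfies the same hypotheses as $V_t$.)

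Substituting $h=H$ and computing $\|H\|_{L^{4/3}(\mathbb F_q^2,dx)}^{2}=(q^{-2}|H|)^{3/2}=q^{-3}|H|^{3/2}$ yields \eqref{bigresult}, uniformly in $t\in T$. For the particular case $\Gamma(x)=\|x\|_{2}$, every $t\in\mathbb F_q\setminus\{0\}$ meets the hypotheses: the Weil-type bound \eqref{weilsize} gives $|V_t|\sim q$, and $\|x\|_{2}-t$ has no linear factor in $\mathbb F_q[x_{1},x_{2}]$ (irreducible over $\mathbb F_q$ when $-1$ is a non-square, and equal to $(x_{1}+ix_{2})(x_{1}-ix_{2})-t$ with no linear factor when $-1=i^{2}$ and $t\ne 0$). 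The main obstacle will be the careful bookkeeping of $q$-powers when swapping the two spaces: a naive repetition of the Section~\ref{section3} argument could drop or duplicate the factor $|V_t|^{-2}$ coming from the asymmetric normalization of $dx$ versus $dm$, and the final exponent $q^{-3}$ on the right-hand side of \eqref{bigresult} depends on isolating exactly this factor.
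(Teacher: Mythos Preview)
Your proposal is correct and follows the same underlying idea as the paper: deduce \eqref{bigresult} from the dual (restriction) form of Theorem~\ref{Main}. The paper, however, sidesteps precisely the ``main obstacle'' you flag. Rather than re-running the Section~\ref{section3} argument with $V_t$ placed inside $(\mathbb F_q^2,dm)$ and tracking the modified normalization $d\tilde\sigma_t$, the paper simply observes that the character sum $\sum_{m\in V_t}\bigl|\sum_{x\in H}\chi(-x\cdot m)\bigr|^{2}$ is symmetric in the roles of $x$ and $m$; relabeling $x\leftrightarrow m$ converts the desired inequality into $q^{-1}\sum_{x\in V_t}\bigl|\sum_{m\in H}\chi(-x\cdot m)\bigr|^{2}\lesssim|H|^{3/2}$, which is \emph{exactly} the restriction estimate $\|\widehat g\|_{L^2(V_t,d\sigma)}\lesssim\|g\|_{L^{4/3}(\mathbb F_q^2,dm)}$ with $g=H$ now regarded as a function on the counting-measure space $(\mathbb F_q^2,dm)$. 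No re-derivation of the extension inequality with altered constants is needed, and the $q$-power bookkeeping reduces to the two one-line identities \eqref{ma1} and \eqref{ma2}. Your route is sound but longer; the paper's variable swap is the shortcut that makes the normalization issue disappear.
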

\begin{proof} For every $t\in T$, we must show that
$$q^{-4} \sum_{m\in V_t} \left|\sum_{x\in H} \chi(-x\cdot m)\right|^2 \lesssim q^{-3} |H|^{\frac{3}{2}},$$
where the constant in  ``$\lesssim$" depends only on the degree of $V_t$ and on the ratio $|V_t|/q.$
Since the forms of variables $m, x$ do not affect on above estimate, we can change the variables, $x\leftrightarrow m.$ Thus, it suffices to show that
\begin{equation}\label{majimac}q^{-1}\sum_{x\in V_t} \left|\sum_{m\in H} \chi(-x\cdot m)\right|^2 \lesssim  |H|^{\frac{3}{2}}.\end{equation}
From Theorem \ref{Main}, we see that for every $t\in T,$
$$\|\widehat{fd\sigma}\|_{L^4(\mathbb F_q^2,dm)}\lesssim \|f\|_{L^2(V_t, d\sigma)}$$
for all functions $f$ on $(V_t, d\sigma)$ where $d\sigma$ is the normalized measure on $V_t$ defined as in (\ref{measuredef}).
By duality (see (\ref{defrestriction}) ), this implies that
$$ \|\widehat{g}\|_{L^2(V_t, d\sigma)} \lesssim \|g\|_{L^\frac{4}{3}(\mathbb F_q^2,dm)}$$
for all functions $g$ on $(\mathbb F_q^2,dm).$ If we take $g$ as the characteristic function on the set $H,$ then we have
$$\|\widehat{H}\|^2_{L^2(V_t, d\sigma)} \lesssim \|H\|^2_{L^\frac{4}{3}(\mathbb F_q^2,dm)}.$$
To complete the proof, we shall show that this inequality is same as in (\ref{majimac}).
Namely, it suffices to prove that
\begin{equation}\label{ma1} \|H\|^2_{L^\frac{4}{3}(\mathbb F_q^2,dm)} =  |H|^{\frac{3}{2}}\end{equation}
and
\begin{equation}\label{ma2}
\|\widehat{H}\|^2_{L^2(V_t, d\sigma)}\sim q^{-1}\sum_{x\in V_t} \left|\sum_{m\in H} \chi(-x\cdot m)\right|^2.
\end{equation}
The equality (\ref{ma1}) is clear because $``dm"$ is the counting measure.
To see that (\ref{ma2}) holds, observe from (\ref{measuredef}) that
$$  \|\widehat{H}\|^2_{L^2(V_t, d\sigma)} = \frac{1}{|V_t|} \sum_{x\in V_t} |\widehat{H}(x)|^2.$$
From (\ref{anotherFourier}) observed in Remark \ref{keydef}, we see that the Fourier transform of $H$ takes the following form:
$$\widehat{H}(x)= \sum_{m\in H} \chi(-x\cdot m).$$
\end{proof}
Thus, the statement (\ref{ma2}) follows immediately from the fact that $|V_t|\sim q.$
Thus, the proof of (\ref{bigresult}) is complete. In particular, if $\Gamma(x)=x_1^2+x_2^2,$ then $\Gamma(x)-t$ for $t\ne 0$ is irreducible which implies that the polynomial $\Gamma(x)-t$ for $t\ne 0$ does not have a linear factor. Moreover, $|V_t|\sim q$ for $t\ne 0.$ In this case, we can therefore take $T=\mathbb F_q\setminus \{0\}.$

\subsection{Proof of Theorem \ref{sharpwolff1}}
We shall provide the complete proof of Theorem \ref{sharpwolff1}.
Let $E, F\subset \mathbb F_q^2$ with $|E||F|\gtrsim q^{\frac{8}{3}}.$
For each $x\in \mathbb F_q^2,$ recall that $\|x\|_2=x_1^2+x_2^2.$
We must prove that
\begin{equation}\label{aimclaim}|\Delta(E,F)|=|\{\|x-y\|_2\in \mathbb F_q: x\in E, y\in F\}| \gtrsim q.\end{equation}
For each $t\in \mathbb F_q$, define a variety $V_t=\{x\in \mathbb F_q^2: \|x\|_2=t\}$ and consider a counting function $\nu(t)$ given by
$$\nu(t)=|\{(x,y)\in E\times F: \|x-y\|_2=t\}| = \sum_{x\in E, y\in F} V_t(x-y).$$
Applying the Fourier inversion theorem (\ref{inversionF}) to the function $V_t(x-y) $ and using the definition of the Fourier transform, we see
\begin{align*}\nu(t)=& q^4 \sum_{m\in \mathbb F_q^2}\overline{\widehat{E}}(m) \widehat{F}(m) \widehat{V_t}(m)\\
=&\frac{|E||F||V_t|}{q^2} + q^4 \sum_{m\in \mathbb F_q^2\setminus \{(0,0)\}}\overline{\widehat{E}}(m) \widehat{F}(m) \widehat{V_t}(m).\end{align*}
Squaring the $\nu(t)$ and summing it over $t\in \mathbb F_q$, we obtain
\begin{eqnarray*}\sum_{t\in \mathbb F_q} \nu^2(t)= q^{-4}|E|^2 |F|^2  \sum_{t\in \mathbb F_q} |V_t|^2
+2q^2 |E||F| \sum_{m\in \mathbb F_q^2\setminus \{(0,0)\}}\overline{\widehat{E}}(m) \widehat{F}(m) \sum_{t\in \mathbb F_q} |V_t|\widehat{V_t}(m)\\
+ q^8 \sum_{m, \xi \in \mathbb F_q^2\setminus \{(0,0)\}} \overline{\widehat{E}}(m) \widehat{F}(m) \overline{\widehat{E}}(\xi) \widehat{F}(\xi) \sum_{t\in \mathbb F_q} \widehat{V_t}(m) \widehat{V_t}(\xi)
= \mbox{I} + \mbox{II}+\mbox{III}.\end{eqnarray*}

The Schwartz-Zippel Lemma says that $|V_t|\lesssim q$ for all $t\in \mathbb F_q.$ Therefore, the value $\mbox{I}$ is clearly given by
 $$|\mbox{I}|=O(q^{-1}|E|^2|F|^2).$$

From Lemma \ref{keylemma1} and the Cauchy-Schwarz inequality, the second value can be estimated by
$$|\mbox{II}|\lesssim q^2 |E||F| \left(\sum_{m\in \mathbb F_q^2}\left|\overline{\widehat{E}}(m)\right|^2\right)^{\frac{1}{2}}
\left(\sum_{m\in \mathbb F_q^2}\left|\widehat{F}(m)\right|^2\right)^{\frac{1}{2}}. $$
Applying the Plancherel theorem (\ref{plancherel}) , we obtain
$$|\mbox{II}|=O( |E|^{\frac{3}{2}}|F|^{\frac{3}{2}}).$$
Thus, if $|E||F|\gtrsim q^{\frac{8}{3}}$, then the first term $\mbox{I}$ dominates the second term $\mbox{II}$. It therefore follows that
\begin{equation}\label{D12}
|\mbox{I}| + |\mbox{II}| = O(q^{-1}|E|^2|F|^2).
\end{equation}

We explicitly estimate the third value $\mbox{III}.$
From Lemma \ref{doubleFourierdecay} and the orthogonality relation of the character $\chi$,  observe that

$$\mbox{III}= q^8 \sum_{m, \xi \in \mathbb F_q^2\setminus \{(0,0)\}} \overline{\widehat{E}}(m) \widehat{F}(m) \overline{\widehat{E}}(\xi) \widehat{F}(\xi) \sum_{t\in \mathbb F_q} \widehat{V_t}(m) \widehat{V_t}(\xi)$$
$$ =q^5 \sum_{m, \xi \in \mathbb F_q^2\setminus \{(0,0)\}} \overline{\widehat{E}}(m) \widehat{F}(m) \overline{\widehat{E}}(\xi) \widehat{F}(\xi)
\left( -1+ \sum_{s\in \mathbb F_q } \chi\left( s(\|m\|_2-\|\xi\|_2)\right)\right)$$
$$= -q^5\sum_{m, \xi \in \mathbb F_q^2\setminus \{(0,0)\}} \overline{\widehat{E}}(m) \widehat{F}(m) \overline{\widehat{E}}(\xi) \widehat{F}(\xi)
+ q^6 \sum_{m,\xi \neq (0,0): \|m\|_2=\|\xi\|_2} \overline{\widehat{E}}(m) \widehat{F}(m) \overline{\widehat{E}}(\xi) \widehat{F}(\xi).$$
$$=\mbox{III}_1 + \mbox{III}_2.$$
By the trivial bound and the Cauchy-Schwarz inequality, we have
$$ |\mbox{III}_1| \leq  q^5\left( \sum_{m\in \mathbb F_q^2}   \left|\overline{\widehat{E}}(m)\right| \left|\widehat{F}(m) \right|   \right)^2$$
$$\leq q^5 \left(\sum_{m\in \mathbb F_q^2} \left|\overline{\widehat{E}}(m)\right|^2\right) \left( \sum_{m\in \mathbb F_q^2} \left|\widehat{F}(m) \right|^2\right).$$ Applying the Plancherel theorem (\ref{plancherel}), the value $\mbox{III}_1$ is estimated by
$$ |\mbox{III}_1|=O(q|E||F|).$$

To estimate the value $\mbox{III}_2$, observe that
$$ \mbox{III}_2 = q^6 \sum_{k\in \mathbb F_q} \left( \sum_{m\ne (0,0): \|m\|_2=k\}} \overline{\widehat{E}}(m) \widehat{F}(m)\right)^2$$
$$= q^6 \left( \sum_{m\ne (0,0): \|m\|_2=0} \overline{\widehat{E}}(m) \widehat{F}(m)\right)^2 + q^6 \sum_{k\in \mathbb F_q\setminus \{0\}} \left( \sum_{\|m\|_2=k} \overline{\widehat{E}}(m) \widehat{F}(m)\right)^2$$
$$= q^6\left( \sum_{m\in V_0} \overline{\widehat{E}}(m) \widehat{F}(m) -\overline{\widehat{E}}(0,0) \widehat{F}(0,0) \right)^2 + q^6 \sum_{k\in \mathbb F_q\setminus \{0\}} \left( \sum_{m\in V_k} \overline{\widehat{E}}(m) \widehat{F}(m)\right)^2.$$
Since $\overline{\widehat{E}}(0,0) \widehat{F}(0,0)=q^{-4}|E||F|$, expanding the first term above and putting it together with the second term, we have
$$\mbox{III}_2 =q^6 \sum_{k\in \mathbb F_q} \left( \sum_{m\in V_k} \overline{\widehat{E}}(m) \widehat{F}(m)\right)^2 -2q^2 |E||F|
 \sum_{m\in V_0} \overline{\widehat{E}}(m) \widehat{F}(m) +  q^{-2}|E|^2 |F|^2 .$$
Putting this estimate together with the estimate $(\ref{D12})$,  we obtain
$$ \sum_{t\in \mathbb F_q} \nu^2(t) = q^6 \sum_{k\in \mathbb F_q} \left( \sum_{m\in V_k} \overline{\widehat{E}}(m) \widehat{F}(m)\right)^2 -2q^2 |E||F|
 \sum_{m\in V_0} \overline{\widehat{E}}(m) \widehat{F}(m) + O(q^{-1} |E|^2|F|^2).$$
Observe that the absolute value of the second term above is less than equal to the number
$$ 2q^2 |E||F|  \sum_{m\in \mathbb F_q^2} \left|\overline{\widehat{E}}(m)\right| \left|\widehat{F}(m)\right|.$$
Using the Cauchy-Schwarz inequality and the Plancherel theorem (\ref{plancherel}), this value is dominated by  $2|E|^\frac{3}{2} |F|^\frac{3}{2}.$
Since we have assumed that $|E||F|\gtrsim q^\frac{8}{3}$, the third term  dominates the second term and so we obtain that
\begin{equation}\label{finallong}
\sum_{t\in \mathbb F_q} \nu^2(t) = q^6 \sum_{k\in \mathbb F_q} \left( \sum_{m\in V_k} \overline{\widehat{E}}(m) \widehat{F}(m)\right)^2 + O(q^{-1}|E|^2|F|^2).
\end{equation}
We are ready to prove that the statement (\ref{aimclaim}) holds.
First we assume that $q=3~~(mod~4).$
In this case, $-1$ is not a square number in $\mathbb F_q,$ because $\eta(-1)=-1$ where $\eta$ is the quadratic character of $\mathbb F_q$ (see Remark $5.13$ in \cite{LN93}). Thus, we see that $V_0=\{x\in \mathbb F_q^2: \|x\|_2=0\}=\{(0,0)\}.$
Therefore, we see that
\begin{equation}\label{goodone}\sum_{t\in \mathbb F_q} \nu^2(t) = q^6 \left(\overline{\widehat{E}}(0,0) \widehat{F}(0,0)\right)^2 + q^6 \sum_{k\in \mathbb F_q\setminus \{0\}} \left( \sum_{m\in V_k} \overline{\widehat{E}}(m) \widehat{F}(m)\right)^2 + O(q^{-1}|E|^2|F|^2).\end{equation}
Note that the first term is same as $ q^{-2}|E|^2|F|^2$ which is dominated by the third term.
In addition, observe that  the absolute value of the second term is less than equal to the value
$$ q^6 \left(\max_{k\in \mathbb F_q\setminus \{0\}}\left|\sum_{m\in V_k} \overline{\widehat{E}}(m) \widehat{F}(m)\right|\right)
 \left( \sum_{m\in \mathbb F_q^2} \left|\overline{\widehat{E}}(m)\right| \left|\widehat{F}(m)\right|\right).$$
In order to get the upper bound of the maximum value, we use the Cauchy-Schwarz inequality and apply Lemma \ref{restriction}. Then, we see
$$\left(\max_{k\in \mathbb F_q\setminus \{0\}}\left|\sum_{m\in V_k} \overline{\widehat{E}}(m) \widehat{F}(m)\right|\right) \lesssim
q^{-3} |E|^\frac{3}{4} |F|^\frac{3}{4}.$$ On the other hand, using the Cauchy-Schwarz inequality and the Plancherel theorem (\ref{plancherel}) yield that
$$\left( \sum_{m\in \mathbb F_q^2} \left|\overline{\widehat{E}}(m)\right| \left|\widehat{F}(m)\right|\right) \leq  q^{-2} |E|^\frac{1}{2} |F|^\frac{1}{2}.$$
Therefore, the second term in (\ref{goodone}) can be estimated by
\begin{equation}\label{nicework}\left|q^6 \sum_{k\in \mathbb F_q\setminus \{0\}} \left( \sum_{m\in V_k} \overline{\widehat{E}}(m) \widehat{F}(m)\right)^2\right| \lesssim q|E|^\frac{5}{4} |F|^\frac{5}{4}.\end{equation}
Putting together all estimates , we see that
$$\sum_{t\in \mathbb F_q} \nu^2(t) \lesssim q|E|^\frac{5}{4} |F|^\frac{5}{4} + q^{-1}|E|^2|F|^2 .$$
Since $|E||F|\gtrsim q^\frac{8}{3},$ it follows that
$$\sum_{t\in \mathbb F_q} \nu^2(t) \lesssim  q^{-1}|E|^2|F|^2 .$$
It is clear that
$$ (|E||F|)^2= \left(\sum_{t\in \Delta(E,F)} \nu(t)\right)^2 \leq |\Delta(E,F)| \sum_{t\in \mathbb F_q} \nu^2(t),$$
where we used the Cauchy-Schwarz inequality .
Thus, we have proved that if $q=3~~(mod~4)$ and $|E||F|\gtrsim q^\frac{8}{3},$ then
$$ |\Delta(E,F)| \gtrsim q.$$
It remains to prove that if $ q=1~~(mod~4)$ and $|E||F|\gtrsim q^\frac{8}{3},$ then  the estimate (\ref{aimclaim}) holds.
Assume that $ q=1~~(mod~4).$ Since $|E||F|=\sum_{t\in \Delta(E,F)} \nu(t)$, it follows that
\begin{equation}\label{idea} \left(|E||F| -\nu(0)\right)^2 = \left(\sum_{t\in \Delta(E,F)\setminus \{0\}} \nu(t)\right)^2 \leq |\Delta(E,F)| \sum_{t\in \mathbb F_q\setminus \{0\}} \nu^2(t).\end{equation}
From Lemma \ref{zerodistance}, recall that
\begin{equation}\label{v0}\nu(0)=O(q^{-1}|E||F|) +q^3 \sum_{m\in V_0} \overline{\widehat{E}}(m) \widehat{F}(m).\end{equation}
Thus, using the estimate (\ref{finallong}) yields
$$ \sum_{t\in \mathbb F_q\setminus \{0\}} \nu^2(t)=\sum_{t\in \mathbb F_q} \nu^2(t) - \nu^2(0)$$
$$= q^6 \sum_{k\in \mathbb F_q\setminus \{0\}} \left( \sum_{m\in V_k} \overline{\widehat{E}}(m) \widehat{F}(m)\right)^2 +  O(q^2|E||F|) \left( \sum_{m\in V_0} \overline{\widehat{E}}(m) \widehat{F}(m)\right)+
O(q^{-1}|E|^2|F|^2) $$
As in (\ref{nicework}), the absolute value of the first term is $\lesssim q|E|^\frac{5}{4} |F|^\frac{5}{4}$ which is dominated by the third term, because we have assumed that $|E||F|\gtrsim q^\frac{8}{3}.$ To estimate the absolute value of the second term, notice that
\begin{equation}\label{almost}\left| \sum_{m\in V_0} \overline{\widehat{E}}(m) \widehat{F}(m)\right|\leq  \sum_{m\in \mathbb F_q^2} \left|\overline{\widehat{E}}(m)\right| \left|\widehat{F}(m)\right| \leq q^{-2} |E|^\frac{1}{2} |F|^\frac{1}{2}, \end{equation}
where the last inequality can be obtained by using the Cauchy-Schwarz inequality and the Plancherel theorem (\ref{plancherel}).
Thus, the absolute value of the second term  is $\lesssim |E|^\frac{3}{2} |F|^\frac{3}{2},$ which is also dominated by the third term if $|E||F|\gtrsim q^\frac{8}{3}.$ Thus, we obtain that
\begin{equation}\label{bottom}
\sum_{t\in \mathbb F_q\setminus \{0\}} \nu^2(t) \lesssim q^{-1}|E|^2|F|^2.
\end{equation}
Next, we claim that
\begin{equation}\label{top} \left(|E||F| -\nu(0)\right)^2 \sim  |E|^2 |F|^2.
\end{equation}
To see this,   observe from (\ref{v0}) and (\ref{almost}) that
$$ |\nu(0)|\lesssim q^{-1}|E||F| +q|E|^\frac{1}{2} |F|^\frac{1}{2} \sim  q|E|^\frac{1}{2} |F|^\frac{1}{2},$$
where the last estimate is clear because $|E||F| \leq q^4.$
Since $|E||F|\gtrsim q^\frac{8}{3}$, it therefore is clear that $|E||F|$ dominates $|\nu(0)|$ and the estimate (\ref{top}) holds.
From (\ref{idea}), (\ref{bottom}), and (\ref{top}), we conclude that
if $q=1 ~~(mod~4)$ and $|E||F|\gtrsim q^\frac{8}{3},$ then
$$ |\Delta(E,F)| \gtrsim q.$$
This completes the proof of Theorem \ref{sharpwolff1}.

\end{document}